\numberwithin{equation}{section}
\newtheorem{theorem}{Theorem}[section]
\newtheorem{corollary}[theorem]{Corollary}
\newtheorem{lemma}[theorem]{Lemma}
\newtheorem{definition}[theorem]{Definition}
\theoremstyle{remark}
\newtheorem{remark}[theorem]{Remark}
\DeclareMathOperator{\diam}{diam}
\DeclareMathOperator{\dist}{dist}
\newcommand{\N}{\mathbb{N}}
\newcommand{\R}{\mathbb{R}}
\newcommand{\sfd}{{\sf d}}
\renewcommand{\d}{{\mathrm d}}
\renewcommand{\d}{{\mathrm d}}
\def\dist{{\mathop\mathrm{\,dist\,}}}
\def\ls{\lesssim}
\def\gs{\gtrsim}
\def\bint{{\ifinner\rlap{\bf\kern.35em--}
\int\else\rlap{\bf\kern.45em--}\int\fi}\ignorespaces}
\def\bbint{{\ifinner\rlap{\bf\kern.35em--}
\hspace{0.078cm}\int\else\rlap{\bf\kern.45em--}\int\fi}\ignorespaces}
\def\diam{{\mathop\mathrm{\,diam\,}}}
\def\bint{{\ifinner\rlap{\bf\kern.35em--}
\int\else\rlap{\bf\kern.45em--}\int\fi}\ignorespaces}
\begin{document}

\title[Density problem for Sobolev spaces in metric measure spaces]
{Density problem for Sobolev spaces on Gehring Hayman domains with the ball separation condition in metric measure spaces}

\author{Jesse Koivu}

\address{University of Jyvaskyla \\
         Department of Mathematics and Statistics \\
         P.O. Box 35 (MaD) \\
         FI-40014 University of Jyvaskyla \\
         Finland}
\email{jesse.j.j.koivu@jyu.fi}

\thanks{The author acknowledges the support from the V\"ais\"al\"a foundation}
\subjclass[2020]{Primary 30L99. Secondary 46E35.}
\keywords{Newtonian Sobolev space, PI space, Lipschitz function, Gromov hyperbolic, Gehring-Hayman condition, ball separation condition.}
\date{\today}



\begin{abstract}
We prove that for a domain $\Omega$ in a PI space $X$ such that $\Omega$ satisfies the Gehring Hayman condition and the ball separation condition, the Newtonian Sobolev space $N^{1,\infty}(\Omega)$ is dense in the space $N^{1,p}(\Omega)$ for $1 < p < \infty$.
\end{abstract}

\maketitle



\section{Introduction}
The concept of density of smooth functions $C^{\infty}(\Omega)$ is a classical topic in the study of Sobolev spaces $W^{1,p}(\Omega)$ on Euclidean domains $\Omega \subset \mathbb{R}^n$. Motivated by the density results of smooth functions, one can ask the following question: Under what assumptions on $p,q$ and $\Omega$ is a Sobolev space $W^{1,q}(\Omega)$ dense in $W^{1,p}(\Omega)$? One answer is in the positive direction by Koskela and Zhang in \cite{KZ2016}, where the authors proved that for a bounded simply connected domain and any $1\leq p < \infty$, it holds that $W^{1,\infty}(\Omega)$ is dense in $W^{1,p}(\Omega)$. They further showed that given the domain is a Jordan domain, then the smooth functions of the plane $C^{\infty}(\mathbb{R}^2)$ are also dense in $W^{1,p}(\Omega)$. Building on the example by Koskela of removable sets (see \cite{K99}) the article of Koskela, Rajala and Zhang \cite{KRZ2017} gave an answer in the negative direction. They found that given an exponent $1 < p < \infty$ there exists a domain $\Omega$ homeomorphic to a unit ball in $\mathbb{R}^3$ via a locally bi-Lipschitz map such that for any $q>p$, the corresponding Sobolev space $W^{1,q}(\Omega)$ is not dense in $W^{1,p}(\Omega)$.
Further, they gave also an answer in the positive direction, showing that whenever $\Omega \subset \mathbb{R}^n$ is a bounded $\delta$-Gromov hyperbolic domain with respect to the quasihyperbolic metric, then for any $1 \leq p < \infty$, it holds that $W^{1,\infty}(\Omega)$ is dense on $W^{1,p}(\Omega)$. They further showed that given the domain is either Jordan (in the case of the plane), or quasiconvex, then also the smooth functions $C^{\infty}(\mathbb{R}^n)$ are dense in $W^{1,p}(\Omega)$.

Considering Gromov hyperbolic domains, the works of Balogh and Buckley \cite{BB2003} and Bonk, Heinonen and Koskela \cite{BHK2001} showed that Gromov hyperbolic domains in Euclidean spaces are precisely those domains that satisfy the ball separation condition, and the Gehring-Hayman condition. In metric spaces it is not known if these are equivalent, but we do know the these two conditions are necessary for Gromov hyperbolicity. The result of Koskela, Rajala and Zhang heavily relies on this equivalence, and in this result we also make heavy use of these two conditions. Here the standing assumption will be that we have a so called $C$-GHS (standing for Gehring-Hayman-Separation with constant C) space that is also a length space. In their paper Balogh and Buckley showed that such a space, that is also locally compact, incomplete and minimally nice, is also a Gromov hyperbolic space.

Taking a step back, let us consider Sobolev spaces on metric measure spaces.
One problem when defining a Sobolev space in such setting, is that there are a multitude of differing definitions, which turn out to be equivalent on Euclidean domains, so each being logical in some sense.
It has been an active topic of research as of late to prove the equivalence of these definitions in the setting of metric measure spaces.
A recent article \cite{AILP2024} by Ambrosio, Ikonen, Lu\v{c}\'ic and Pasqualetto considered different definitions of Sobolev spaces on metric measure spaces, and they proved that for a complete and separable metric space, the most common definitions coincide.
One must note that this equality of definitions only holds for the Sobolev spaces defined on a complete and separable metric space and does not directly imply the same for example the case of non-complete spaces (say a domain).

When studying a Sobolev space, it is a natural question to ask whether results directly generalised from Euclidean spaces hold. Promising results seem to stem in relation to metric spaces that mimic the Euclidean structure. Here we shall consider PI spaces. A PI space is a space that has a doubling measure, and it supports a Poincar\'e inequality with some exponents. Koskela and Haj\l asz in the foundational work \cite{HKSmP} showed that almost all the relevant Sobolev inequalities and embedding theorems have a formulation in the metric setting. However upon stepping outside PI spaces, the situation breaks down as we study spaces of less structure. However even in non-PI spaces one can study Sobolev functions. For the most part in these spaces less is known, or the results turn into counterexamples of varying pathology.

The proof in this article is similar to the Euclidean one, which is dependent on the Whitney decomposition of a domain and smooth partitions of unity.
The concept of smoothness does not come to play when considering Sobolev spaces on metric spaces, and as such it is typical that Lipschitz functions take the role of smooth functions. Furthermore Whitney decompositions with cubes are no longer accessible as metric spaces obey less or none of straight line geometry, so one has to consider similar collections with dyadic balls or the like. Fundamentally these differences do not actually change much.

Given all of this, the main result of this paper is as follows
\begin{theorem}\label{thm:density}
Let $(X,\sfd,\mu)$ be a doubling metric measure space satisfying a local Poincar\'e inequality. Let $\Omega \subset X$ be a C-GHS space and $1 < p < \infty$. Then $N^{1,\infty}(\Omega)$ is dense in $N^{1,p}(\Omega)$.
\end{theorem}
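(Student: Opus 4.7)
The plan is to follow the Euclidean template of Koskela-Rajala-Zhang, constructing Lipschitz approximations of $u \in N^{1,p}(\Omega)$ via a discrete convolution on a Whitney-type ball decomposition and then promoting them to $N^{1,\infty}$ by a McShane-type Lipschitz truncation. The C-GHS geometry enters precisely at the truncation step, where the local, chain-based upper-gradient estimates must be converted into uniform Lipschitz control on a large subset of $\Omega$.

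First I would reduce to the case $u \in L^\infty(\Omega) \cap N^{1,p}(\Omega)$ by truncation, using that $T_N u := \max(-N, \min(N, u))$ converges to $u$ in $N^{1,p}(\Omega)$ as $N \to \infty$ and has a $p$-weak upper gradient $\leq g_u$ pointwise. Next, for a scale parameter $\delta > 0$ I would fix a Whitney-type cover $\mathcal{W}_\delta = \{B_i = B(x_i, r_i)\}_{i \in I_\delta}$ of $\Omega$ with $r_i \simeq \dist(x_i, \partial\Omega)/M$ and $\sup_i r_i \leq \delta$, bounded overlap of $\{\lambda B_i\}$ at a fixed dilation, together with a subordinate Lipschitz partition of unity $\{\phi_i\}$ satisfying $\sum_i \phi_i \equiv 1$ on $\Omega$ and $\LIP(\phi_i) \leq C/r_i$. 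The candidate approximation is the discrete convolution
\[
v_\delta(x) := \sum_{i \in I_\delta} u_{B_i}\,\phi_i(x), \qquad u_{B_i} := \bint_{B_i} u \, d\mu.
\]
Standard telescoping across neighbouring Whitney balls combined with the local Poincar\'e inequality shows that on each $B_j$ the function $v_\delta$ admits an upper gradient bounded by a multiple of $\bigl(\bint_{CB_j} g_u^q \, d\mu\bigr)^{1/q}$ for some $q \in (1,p)$, obtained from a Keith-Zhong type self-improvement of the PI inequality. Combined with Lebesgue differentiation of the averages $u_{B_i}$ and the $L^p$-boundedness of the Hardy-Littlewood maximal function (using $q < p$), this yields $v_\delta \to u$ in $N^{1,p}(\Omega)$ as $\delta \to 0$.

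To upgrade $v_\delta$ from $N^{1,p} \cap L^\infty$ to $N^{1,\infty}(\Omega)$ is where the C-GHS hypothesis is genuinely used. For $\lambda > 0$ set $E_\lambda := \{x \in \Omega : M(g_u^q)(x) \leq \lambda^q\}$. The central claim is the uniform Lipschitz estimate
\[
|v_\delta(x) - v_\delta(y)| \leq C\lambda\,\sfd(x,y) \qquad \text{for all } x, y \in E_\lambda,
\]
with $C$ independent of $\delta$. To establish it I would join $x$ and $y$ by a quasihyperbolic geodesic in $\Omega$, cover it by a chain of consecutive Whitney balls, and telescope the values of $v_\delta$ along the chain using $\sum_i \phi_i \equiv 1$ and the PI inequality step by step, ending up with a bound of the form $C\lambda \sum_k r_{i_k}$. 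The ball separation condition controls the multiplicity of Whitney balls of a given scale meeting the chain, and the Gehring-Hayman condition replaces the resulting sum of radii by a fixed multiple of $\sfd(x,y)$. A McShane extension of $v_\delta|_{E_\lambda}$ to $\Omega$ then produces $w_{\delta,\lambda} \in N^{1,\infty}(\Omega)$ with Lipschitz constant $\lesssim \lambda$ and $\|w_{\delta,\lambda}\|_\infty \leq \|u\|_\infty$. Since $\mu(\Omega \setminus E_\lambda) \to 0$ as $\lambda \to \infty$ by the weak-type $(1,1)$ inequality, a diagonal choice $\lambda = \lambda(\delta) \to \infty$ as $\delta \to 0$ delivers the desired sequence in $N^{1,\infty}(\Omega)$ converging to $u$ in $N^{1,p}(\Omega)$.

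The most delicate step is the chain-Lipschitz estimate on $E_\lambda$: it is the only place where both C-GHS conditions are actually invoked, as one needs ball separation to keep the chain combinatorially bounded and Gehring-Hayman to pass from the length of the quasihyperbolic chain to the ambient distance $\sfd(x,y)$. Everything else is a routine adaptation of Hajlasz-Kinnunen Lipschitz truncation and Whitney-partition discrete convolution to the PI-space setting.
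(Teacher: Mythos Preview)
Your approach is genuinely different from the paper's, and the pivotal chain-Lipschitz estimate on $E_\lambda$ does not go through as written. When you telescope $v_\delta$ along the Whitney chain covering the quasihyperbolic geodesic from $x$ to $y$, the Poincar\'e inequality yields
\[
|v_\delta(x)-v_\delta(y)| \;\lesssim\; \sum_k r_{i_k}\Bigl(\bint_{CB_{i_k}} g_u^{\,q}\,d\mu\Bigr)^{1/q},
\]
and to reach the claimed bound $C\lambda\sum_k r_{i_k}$ you need each factor $\bigl(\bint_{CB_{i_k}} g_u^{\,q}\bigr)^{1/q}$ to be at most $\lambda$. But the hypothesis $x,y\in E_\lambda$ only controls averages over balls \emph{containing $x$ or $y$}; the intermediate Whitney balls $B_{i_k}$ sit along the geodesic and in general contain neither endpoint, so $M(g_u^{\,q})(x)\le\lambda^q$ and $M(g_u^{\,q})(y)\le\lambda^q$ say nothing about them. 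Neither Gehring--Hayman nor ball separation bounds those interior averages. In the standard Haj\l asz--Kinnunen truncation one sidesteps this by chaining through balls \emph{centred at $x$} (and at $y$) of dyadically increasing radii, meeting in a common large ball; that works on all of $X$ or on a John/uniform domain, but in a general C-GHS domain such centred balls leave $\Omega$, and the quasihyperbolic chain you are forced to use instead is not centred at the endpoints. This is a genuine obstruction, not a technicality.

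The paper proceeds by an entirely different mechanism and never attempts a maximal-function Lipschitz truncation. Using both C-GHS conditions it builds a geometric decomposition $\Omega=\Omega_m\cup E_m\cup F_m$ (Lemma~\ref{lem:wprop}) in which $\Omega_m$ is a compactly contained core and the boundary layer $E_m\cup F_m$ is cut into finitely many pieces $S_j,T_j$ whose $2^{-m}$-neighbourhoods have uniformly bounded overlap (Lemma~\ref{lem:overlap}). The core $\Omega_m$ is enclosed in a uniform subdomain $\Omega'\subset\Omega$; one extends $v|_{\Omega'}$ to $N^{1,p}(X)$ via the extension theorem for uniform domains in PI spaces and then approximates the extension by an honest Lipschitz function $u$ on the whole space $X$. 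The approximant on $\Omega$ is $u_m=u\psi+\sum_j a_j\phi_j+\sum_j a_j\varphi_j$: it equals $u$ on the core and is a \emph{finite} combination of constants times Lipschitz cutoffs on the boundary layer, so membership in $N^{1,\infty}(\Omega)$ is automatic---no Lipschitz estimate across $\Omega$ is ever needed. The C-GHS hypotheses enter only through Lemmas~\ref{lem:wc}, \ref{lem:wprop} and \ref{lem:overlap} in assembling the decomposition and bounding overlaps, which is precisely what lets the argument avoid the obstruction your route runs into.
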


This gives a generalisation of \cite[Theorem 1.3]{KRZ2017} into the metric setting.

\section{Preliminaries}

We will first fix some definitions regarding Sobolev spaces in metric measure spaces and what is meant by a PI space. Throughout the article we assume that $(X,\sfd,\mu)$ is a complete and separable metric measure space, such that $\mu$ is a locally finite Borel measure. 
Furthermore throughout the article whenever we mention 'Sobolev space', we particularly mean the Newtonian Sobolev space $N^{1,p}$.
We begin with the modulus of a curve family as a notion of measuring null sets in curve families. The definition for $1 \leq p < \infty$ follows the exposition of Heinonen, Koskela, Shanmugalingam and Tyson \cite{HKST}. Although the case $p = 1$ is not used here, it is included for completeness. For the case $p = \infty$, our definitions are based on the exposition of Durand-Cartagena and Jaramillo \cite{DCJ}.

\begin{definition}
    Let $1\leq p < \infty$. Let $\Gamma$ be a collection of curves $\gamma:[a,b] \to X$. The p-modulus of the curve family $\Gamma$, denoted by $\textrm{Mod}_p(\Gamma)$, is defined to be the number
    \[
    \inf_\rho \{\lVert \rho\rVert_{L^p}^p : \int_\gamma \rho ds \geq 1 \,\,\textrm{for each rectifiable} \,\, \gamma \in \Gamma\},
    \]
    where the infimum is taken over non-negative Borel measurable functions $\rho$.

    If $p = \infty$, we say the $\infty$-modulus of the curve family $\Gamma$, denoted by $\textrm{Mod}_\infty(\Gamma)$, is defined to be the number
    \[
    \inf_\rho \{\lVert \rho\rVert_{L^\infty}:\int_\gamma \rho \geq 1\,\, \textrm{for all rectifiable} \,\, \gamma\in \Gamma\},
    \]
    where the infimum again is taken over non-negative Borel measurable functions $\rho$.
\end{definition}
Further, we have the definition of an upper gradient and a $p$-weak upper gradient.
\begin{definition}
Let $1 < p \leq \infty$.
    Let $\Omega \subset X$ be an open subset of the metric space $(X,\sfd)$. We say that a function $g \in L^{p}(\Omega)$ is an upper gradient of $u \in L^{p}(\Omega)$, given that for every rectifiable $\gamma:[a,b] \to X$ it holds
    \[
    \lvert u(\gamma(a))-u(\gamma(b))  \rvert \leq \int_\gamma g\,ds. 
    \]
    We say that $g$ is a $p$-weak upper gradient given that the inequality above holds for all rectifiable curves except a null set with respect to $\textrm{Mod}_p$.
\end{definition}

Now with the upper gradient we can define the Sobolev space $N^{1,p}(\Omega)$.

\begin{definition}
Let $1 < p \leq \infty$.
 For an open subset $\Omega\subset X$ we define the space $N^{1,p}(\Omega)$ to consist of functions $u \in L^{p}(\Omega)$ such that there exists a $p$-weak upper gradient $g \in L^{p}(\Omega)$ of $u$. 

    We endow the space $N^{1,p}(\Omega)$ with the norm 
    \[
    \lVert u \rVert_{N^{1,p}(\Omega)} := \lVert u \rVert_{L^p(\Omega)} + \inf \lVert g \rVert_{L^p(\Omega)},
    \]
    where the infimum is taken over all upper gradients $g$ of $u$.
    The minimal (with respect to the $L^p$-norm) weak upper gradient $g$ of $u$ is denoted by $\nabla u$. 
\end{definition}

We define Lipschitz spaces as follows.
\begin{definition}
   We define the space $\textrm{Lip}(X)$ to be the space
    \[
    \textrm{Lip}(X) = \{f:X\to \R : f\,\, \textrm{is}\,\,\textrm{Lipschitz}\}.
    \]
\end{definition}

We look to define PI spaces. The PI assumption consists of two properties, the doubling property and the Poincar\'e inequality. We define these now.

\begin{definition}
    We say that a measure $\mu$ on $X$ is a doubling measure, given that there exists a constant $C > 0$, such that for every ball $B \subset X$ it holds \[
    \mu(2B) \leq C\mu(B).
    \]
    We call a space $(X,\d,\mu)$ a doubling space, given that the measure $\mu$ is doubling.
\end{definition}

\begin{definition}
    We say that $(X,\d,\mu)$ is a $(q,p)$-PI space given that $\mu$ is doubling and $X$ supports a weak local $(q,p)$-Poincar\'e inequality, that is there exists a constant $c_* > 0$ such that for every $u \in L^1_{{\textrm{loc}}}(X)$ and $g$ an upper gradient of $u$ it holds for some $\sigma > 0$
    \[
    \frac{1}{\mu(B)}\Big(\int_B \lVert u(x)-u_B \rVert^q d\mu(x)\Big)^{\frac{1}{q}} \leq c_* \diam(B)\frac{1}{\mu(5\sigma B)}\Big(\int_{5\sigma B} g(x)^pd\mu(x)\Big)^{\frac{1}{p}},
    \]
    here $B$ is any ball, and $u_B$ is the average integral of $u$ over $B$.
\end{definition}





We now look to strengthen the $(1,p)$-Poincar\'e inequality in our setting so that on the right hand side we have also the ball $B$.
This inequality is called the strong local $(1,p)$-Poincar\'e inequality.

\begin{lemma}
    For a geodesic length space $(X,\sfd,\mu)$ that is a $(1,p)$-PI-space it holds
    \[
    \Big(\int_B \lVert u(x)-u_B \rVert d\mu(x)\Big) \leq c_* \diam(B)\Big(\int_{B} g(x)^pd\mu(x)\Big)^{\frac{1}{p}}
    \]
\end{lemma}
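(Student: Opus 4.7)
The plan is to upgrade the weak $(1,p)$-Poincar\'e inequality, which has the enlarged ball $5\sigma B$ on its right-hand side, to one with only $B$, by means of the standard chaining/telescoping argument of Haj\l asz--Koskela, which exploits the geodesic structure of $X$ in an essential way. Fix a ball $B = B(x_0,R)$ and, for each Lebesgue point $x \in B$ of $u$, I would construct a chain of balls $B_k = B(x_k,r_k)$, $k = 0,1,2,\dots$, together with an auxiliary ``base'' ball $B_0 \subset B$ near $x_0$. The centres $x_k$ are placed along a geodesic from $x_0$ to $x$ (this uses that $X$ is geodesic), the radii are chosen so that $r_k \sim 2^{-k}R$, $x \in B_k$ for every $k$, and $x_k \to x$, and, crucially, so that each enlargement $5\sigma B_k$ remains inside $B$; the last requirement can be arranged by choosing $r_k$ proportional to $(R - \sfd(x_k,x_0))/(5\sigma - 1)$.

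From the Lebesgue differentiation theorem one has $u_{B_k} \to u(x)$ for $\mu$-a.e. $x$, so telescoping yields
\[
|u(x) - u_{B_0}| \le \sum_{k\ge 0} |u_{B_k} - u_{B_{k+1}}|.
\]
For each $k$, both $B_k$ and $B_{k+1}$ lie in a common ball of radius comparable to $r_k$; applying the weak Poincar\'e inequality on that ball together with the doubling property bounds the summand by a constant times
\[
r_k \left( \frac{1}{\mu(5\sigma B_k)} \int_{5\sigma B_k} g^p \, d\mu \right)^{1/p}.
\]
Since $5\sigma B_k \subset B$, doubling turns this into $C r_k \bigl(M(g^p \chi_B)(x)\bigr)^{1/p}$, where $M$ is the Hardy--Littlewood maximal operator on $X$. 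Summing the geometric series in $r_k$, and bounding $|u_{B_0} - u_B|$ by a single application of the weak PI on a fixed enlargement, yields the pointwise estimate
\[
|u(x) - u_B| \le CR \bigl(M(g^p \chi_B)(x)\bigr)^{1/p} \quad \text{for } \mu\text{-a.e. } x \in B.
\]

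Raising to the $p$-th power, integrating over $B$, and invoking the $L^p$-boundedness of $M$ on the doubling space (which is where $p > 1$ is used) produces $\int_B |u - u_B|^p\, d\mu \le C R^p \int_B g^p\, d\mu$, and a final application of H\"older's inequality gives the $L^1$ statement of the lemma. The main technical hurdle is organising the chain so that each $5\sigma B_k$ fits inside $B$ while the radii still decay geometrically with controlled total sum; this is precisely where one uses a genuine geodesic, since the chain centres must lie on a curve whose distance from $\partial B$ one can bound below in terms of $r_k$. In a length space possessing only approximate geodesics the same argument would go through along an approximating curve with a small loss in constants.
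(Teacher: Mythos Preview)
Your approach is precisely the Haj\l asz--Koskela chaining argument that the paper invokes by citation (it simply refers to \cite{HKSmP}, Corollary~9.5 and Theorem~9.7, together with the observation that balls in a geodesic space are John domains). So strategically you are on the same track as the paper's cited source.

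There is, however, a genuine slip in your maximal-function step. From the pointwise bound $|u(x)-u_B|\le CR\,\bigl(M(g^p\chi_B)(x)\bigr)^{1/p}$ you raise to the $p$-th power and integrate, arriving at
\[
\int_B|u-u_B|^p\,d\mu \le CR^p\int_B M(g^p\chi_B)\,d\mu.
\]
The right-hand side is $\lVert M(g^p\chi_B)\rVert_{L^1}$, and the Hardy--Littlewood maximal operator is \emph{not} bounded on $L^1$; the ``$L^p$-boundedness of $M$'' you invoke would be the right tool if the argument of $M$ were $g$ rather than $g^p$. The clean fix is to stay at the $L^1$ level and use the weak-type $(1,1)$ bound for $M$ via Kolmogorov's inequality: for $0<s<1$,
\[
\int_B (Mf)^s\,d\mu \le C(s)\,\mu(B)^{1-s}\lVert f\rVert_{L^1}^{\,s},
\]
which with $s=1/p$ and $f=g^p\chi_B$ gives exactly the averaged $(1,p)$-Poincar\'e inequality on $B$. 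Alternatively, one can bypass the maximal function entirely and bound $\int_B|u-u_B|\,d\mu$ by a direct Fubini-type summation over the chain balls, as in \cite{HKSmP}; this also removes the apparent need for $p>1$ at this step.
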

    \begin{proof}
        See \cite{HKSmP} Corollary 9.5 and Theorem 9.7.

        Since $X$ is a geodesic length space, it follows that balls $B \subset X$ are John domains.
    \end{proof}

Throughout the rest of the article we assume that $(X,\d,\mu)$ is a $(1,p)$-PI space.

\begin{definition}
Let $\Omega \subset X$ be a domain that is not the whole space. The quasihyperbolic distance between $x,y \in \Omega$ is defined as
\[
\dist_{qh}(x,y) = \inf_\gamma \int_\gamma \dist(z,\partial\Omega)^{-1}dz,
\]
where the infimum is taken over all rectifiable curves $\gamma$ connecting $x,y$. A curve attaining this infimum is called a quasihyperbolic geodesic connecting $x,y$.

A domain $\Omega$ is called $\delta$-Gromov hyperbolic with respect to the quasihyperbolic metric given that for all $x,y,z \in \Omega$ and any corresponding quasihyperbolic geodesics $\gamma_{x,y},\gamma_{z,y},\gamma_{x,z}$ it holds
\[
\dist_{qh}(w,\gamma_{z,y}\cup\gamma_{x,z}) \leq \delta,
\]
for any $w \in \gamma_{x,y}$.
\end{definition}

\begin{remark}
    Given that $X$ is locally compact, there exists quasihyperbolic geodesics between any points $x,y$. This follows from compactness by taking a sequence of curves converging in length to the infimum.
\end{remark}

\begin{definition}
We define the internal metric on an open subset $\Omega$ of $X$, by setting
\[
\sfd_\Omega(x,y) = \inf_{\gamma\subset \Omega} \ell(\gamma).
\]
\end{definition}

For balls defined with respect to the internal distance of a domain $\Omega$ we denote $B_\Omega$.

\begin{definition}
Let $c>0$. We define for an internal metric ball $B$ the relatively closed (scaled) inner metric ball
\[
(c)_\Omega B = \{y\in \Omega: \dist_\Omega(y,x)\leq c\diam(B) \},
\]
where $x$ is the center of the ball $B$.
\end{definition}

\section{Ball separation and the Gehring-Hayman condition}

\subsection{Gromov hyperbolic domains in PI spaces}
We begin by introducing the so called (C-Gehring-Hayman-Separation) C-GHS spaces.

\begin{definition}
We say that a bounded domain $\Omega \subset X$ of a metric space $(X,\d)$ is C-GHS domain (or space) if there exists a constant $C \geq 1$ such that the following conditions hold
\begin{itemize}
\item For any $x,y \in \Omega$ and any quasihyperbolic geodesic $\Gamma$ joining x and y, and every $z \in \Gamma$, the internal metric ball
\[
B = B_{\Omega}(z,C \dist(z, \partial\Omega)),
\]
satisfies $\gamma \,\cap B \neq \emptyset$ for any curve $\gamma \subset \Omega$ that joins $x$ and $y$. We call this property \textit{the C-ball-separation condition}.
\item For any $x,y \in \Omega$, the metric length of each quasihyperbolic geodesic connecting $x$ and $y$ is at most $C\dist_\Omega(x,y)$. We call this condition \textit{the C-Gehring-Hayman condition}.
\item The space $(\Omega, \sfd_{\Omega})$ is a locally compact length space.
\end{itemize}
\end{definition}
Notice that the third condition is fulfilled by a locally compact PI space.
The first two conditions are known to be equivalent with the Gromov hyperbolicity of a domain in Euclidean spaces, see \cite{BB2003} and \cite{BHK2001}.

\section{Whitney-type covering}
\begin{lemma}\label{lem:31}
    Let $\Omega \subset X$ be a domain that is not the whole space.
    There exists a Whitney-type covering for $\Omega$. More specifically, there exists a collection $\{B_i = B_{\Omega}(x_i,r_i) \}_{i\in \N}$ of countably many internal balls in $\Omega$, such that the following conditions hold.
    \begin{enumerate}
        \item $\Omega = \bigcup_{i\in \N} \frac{7}{8}B_i$.
        \item $\sum_{i=1}^{\infty} \chi_{3B_i}(x)\leq N$, for some fixed $N$ and all $x \in X$.
        \item $\frac{1}{2}\diam(B_i) \leq \diam(B_j) \leq 2\diam(B_i)$, whenever $\dist_{\Omega}(B_i,B_j) = 0$
        \item $\diam(B_i) \leq \dist(B_i, \partial\Omega) \leq 4\diam(B_i)$.
    \end{enumerate}
\end{lemma}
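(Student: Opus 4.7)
The plan is to run the standard Whitney covering construction, adapted to internal metric balls via the Vitali $5r$-covering lemma in separable doubling metric spaces. Write $d(x) := \dist(x, \partial\Omega)$ for $x \in \Omega$ and fix $\lambda = 20$. Apply the $5r$-covering lemma to the family $\{B_\Omega(x, d(x)/\lambda) : x \in \Omega\}$ (splitting by dyadic annuli of $d$ first, if $\Omega$ is unbounded, so that radii are bounded on each piece) to obtain a countable disjoint subfamily $\{B_\Omega(x_i, d(x_i)/\lambda)\}_{i \in \N}$ whose $5$-dilates still cover $\Omega$. Define $B_i := B_\Omega(x_i, r_i)$ with $r_i := \tfrac{40}{7\lambda} d(x_i) = \tfrac{2}{7} d(x_i)$, so that $\tfrac{7}{8} B_i$ is precisely the $5$-dilate of the selected ball; property (1) is then immediate.

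For (4), since $r_i < d(x_i)$, every point of $B_i$ lies at ambient distance less than $d(x_i)$ from $x_i$, so $\dist(B_i, \partial\Omega) \geq d(x_i) - r_i = \tfrac{5}{7} d(x_i)$, while $\diam(B_i) \leq 2 r_i = \tfrac{4}{7} d(x_i)$; this yields the first inequality of (4). For the second, since $(\Omega, \sfd_\Omega)$ is a length space and $B_i \subsetneq \Omega$, one has $\sup_{y \in B_i} \sfd_\Omega(x_i, y) = r_i$, hence $\diam(B_i) \geq r_i$, so $\dist(B_i, \partial\Omega) \leq d(x_i) = \tfrac{7}{2} r_i \leq 4 \diam(B_i)$.

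Properties (2) and (3) reduce to the same overlap calculation. If $x \in 3B_{i_1} \cap \dots \cap 3B_{i_k}$, then $\sfd(x, x_{i_j}) \leq \sfd_\Omega(x, x_{i_j}) < 3 r_{i_j} = \tfrac{6}{7} d(x_{i_j})$, and combining with the $1$-Lipschitz property of $d$ yields
\[
\tfrac{7}{13} d(x) < d(x_{i_j}) < 7 d(x),
\]
so all the $d(x_{i_j})$'s are comparable with a universal constant. The disjoint selected balls $B(x_{i_j}, d(x_{i_j})/\lambda)$ then all sit inside a single ambient ball $B(x, C d(x))$ of comparable radius, each has $\mu$-measure comparable to $\mu(B(x, C d(x)))$ by doubling, and disjointness forces $k$ to be bounded by a universal constant. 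The same computation with the tighter bound $\sfd_\Omega(x_i, x_j) \leq r_i + r_j$, which holds when $\dist_\Omega(B_i, B_j) = 0$, gives
\[
\frac{d(x_i)}{d(x_j)} \leq \frac{1+2/7}{1-2/7} = \frac{9}{5} < 2,
\]
which is exactly (3).

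The main point of care is keeping the internal and ambient metrics straight throughout: the selection and the dilates $3B_i$ live in $(\Omega, \sfd_\Omega)$, while the comparability of the $d(x_i)$'s and the doubling count for overlap are genuinely ambient-metric statements. The choice $\lambda = 20$ (any sufficiently large $\lambda$ works) is precisely what guarantees that every $3B_i$ sits at a positive distance from $\partial\Omega$ comparable to $d(x_i)$, so that internal and ambient balls coincide on the relevant scales and all the usual Euclidean Whitney estimates transfer without modification.
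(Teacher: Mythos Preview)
Your argument is correct and reaches the same conclusion, but the construction is genuinely different from the paper's. The paper slices $\Omega$ into dyadic annuli $A_k=\{x:\dist(x,\partial\Omega)\in[2^{-k},2^{-k+1})\}$, picks a maximal $2^{-k-3}$-separated net in each $A_k$, and uses balls of the fixed radius $2^{-k-2}$; properties (1)--(4) are then read off from the dyadic structure and the bounded overlap is obtained by noting that $3B_k$ meets at most a bounded number of adjacent annuli. You instead run the Vitali $5r$-covering lemma on the continuous family $\{B_\Omega(x,d(x)/\lambda)\}$, deduce (1) from the $5$-dilate covering, and get (2) by the standard disjoint-balls-in-a-doubling-space volume count. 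The paper's route yields balls whose radii take only dyadic values $2^{-k-2}$, which dovetails neatly with the scale parameter $2^{-m}$ in the subsequent decomposition of $\Omega$ (Section~5); your route is more streamlined for the lemma in isolation but would force minor cosmetic changes downstream (replacing ``$\diam(B)\ge 2^{-m}$'' by a continuous threshold on $d(x_i)$).

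Two small points worth tightening. First, your conclusion for (3) gives the ratio $d(x_i)/d(x_j)\le 9/5$, which is a bound on radii; calling this ``exactly (3)'' tacitly identifies $\diam(B_i)$ with $2r_i$, the same convention the paper uses. Second, your last paragraph (internal and ambient balls coincide when $3r_i<d(x_i)$) is exactly what makes the doubling count in (2) work, but it uses that $X$ is a length (or at least quasiconvex) space so that an ambient geodesic between nearby points of $\Omega$ stays in $\Omega$; this is available here since complete PI spaces are quasiconvex, but it is worth saying explicitly.
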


\begin{proof}
Define the dyadic annular sections of $\Omega$
\[
A_k = \{x \in \Omega: \dist(x,\partial\Omega) \in [2^{-k},2^{-k+1})\},
\]
and for $k \in \N$ and choose from each $A_k$ a maximal $2^{-k-3}$-separated net $\{x_{k,j}\}_j \subset A_k$. Denote $r(k)=2^{-k-2}$ and let $\{B_k\}_{k\in \N}$, where $B_k = B_{\Omega}(x_{k,j},r(k))$.

Property $(1)$ holds since $\{x_{k,j}\}_j$ is a maximal separated net. Indeed if there is a point $x \in \Omega$ that is not in any of the slightly shrunk balls, it is also $2^{-k-3}$-separated in relation to the other points.

For property $(2)$ we notice that for a ball $B_k = B_{\Omega}(x_{k,j},r(k))$ with $x_{k,j} \in A_k$, it holds $3B_k \cap A_{k+4} = \emptyset$. At the boundary of the annular section an enlargened ball will only touch the next three sections, but since the sections are half open, it will not reach the boundary of the third one.

Property $(3)$ is clear by definition, since the balls that are at a zero distance are in subsequent annular sections and thus their radii $r_1,r_2$ satisfy $\frac{1}{2}r_2 \leq r_1 \leq 2r_2$.

Property $(4)$ also holds by definition of the covering.
 
\end{proof}

We also note the following lemma on connecting internal balls of the covering through chains of balls.
Connecting two balls via a chain of balls we mean that any two points of the given balls can be connected by a curve that exists in the union of the chain.

We use the notation $G(B,\tilde {B})$ for a chain connecting $B,\tilde {B}$, and the notation $cG(B,\tilde {B})$ for the same chain, but each of the balls in the chain are enlarged by a factor $c$. Note here we are taking $B$ and $\tilde {B}$ to be balls in the chain.

\begin{lemma}\label{lem:wc}
    Assume $B$ and $\tilde {B}$ are balls in the Whitney-type covering given in Lemma \ref{lem:31} of $\Omega$ satisfying
    \[
    \frac{1}{c}\diam(B) \leq \diam(\tilde {B}) \leq c\diam(B),
    \]
    for some $c > 1$ and
    \[
    \dist_{\Omega}(B,\tilde {B}) \leq c\diam(B).
    \]
Assume further that the two balls can be joined by a chain of Whitney-type balls of $\Omega$ of diameter larger or equal to $\frac{1}{c}\diam(B)$. Then we find a chain $G_0(B,\tilde {B})$ of at most $C' = C'(c,C)$ Whitney-type balls of $\Omega$ of diameter comparable to $\diam(B)$, connecting the balls $B$ and $\tilde {B}$. Furthermore, the balls in the chain $G_0$ are ordered. Enlarging the balls in the chain so $G(B,\tilde {B}) = \frac{8}{7}G_0(B,\tilde {B})$, we obtain that the balls in the chain $G$ satisfy
\[
\mu(B_i \cap B_{i+1}) \gs_c \mu(B_i),
\]
where $B = B_1$, $\tilde {B} = B_{N_G}$, $i = 1,2,\ldots, N_G-1$ and $N_G$ is the number of balls in the chain $G$.
\end{lemma}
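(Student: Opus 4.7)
We build $G_0$ by constructing a curve $\Gamma$ from $x_B$ to $x_{\tilde B}$ in $\Omega$ that is simultaneously of length $\lesssim_c\diam(B)$ and at distance $\gtrsim_{c,C}\diam(B)$ from $\partial\Omega$, and then extracting the Whitney-type balls of Lemma~\ref{lem:31} meeting $\Gamma$, in order. The bound $N_G\leq C'$ comes from the bounded overlap of the covering and doubling, and the measure estimate on $\mu(B_i\cap B_{i+1})$ for $G=\tfrac{8}{7}G_0$ follows from doubling alone.

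\textbf{Step 1 (A short thick curve).} Concatenating arcs through the intersections of successive balls of the hypothesised chain yields a curve $\sigma\subset\Omega$ from $x_B$ to $x_{\tilde B}$ with $\dist(\sigma,\partial\Omega)\geq\tfrac{1}{c}\diam(B)$, by property~(4) of Lemma~\ref{lem:31}. Let $\Gamma$ be a quasihyperbolic geodesic between $x_B$ and $x_{\tilde B}$, whose existence is ensured by local compactness. The C--Gehring--Hayman condition gives
\[
\ell(\Gamma)\leq C\,\sfd_\Omega(x_B,x_{\tilde B})\lesssim_c\diam(B).
\]
For every $z\in\Gamma$, the C--ball-separation condition applied to $\sigma$ supplies $w\in\sigma$ with $\sfd_\Omega(w,z)\leq C\dist(z,\partial\Omega)$, whence
\[
\tfrac{1}{c}\diam(B)\leq \dist(w,\partial\Omega)\leq \dist(z,\partial\Omega)+\sfd_\Omega(z,w)\leq (1+C)\dist(z,\partial\Omega),
\]
so $\dist(z,\partial\Omega)\geq \tfrac{\diam(B)}{c(1+C)}$ for every $z\in\Gamma$.

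\textbf{Step 2 (Extracting the chain).} By property~(1), every point of $\Gamma$ lies in some covering ball $\tfrac{7}{8}B_i$; by property~(4) and Step~1 such $B_i$ satisfies $\diam(B_i)\asymp_{c,C}\diam(B)$. Traversing $\Gamma$ and listing the balls it enters in order, discarding repetitions, yields $G_0=(B_1,\ldots,B_{N_G})$ with $B_1=B$, $B_{N_G}=\tilde B$, and consecutive balls sharing a transition point of $\Gamma$. The bound $N_G\leq C'(c,C)$ follows from the bounded-overlap property~(2) combined with doubling: the centres $x_i$ lie in a set of diameter $\lesssim_{c,C}\diam(B)$ of measure $\lesssim_{c,C}\mu(B)$, while each $\mu(B_i)\gtrsim_{c,C}\mu(B)$.

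\textbf{Step 3 and main obstacle.} Pass to $G=\tfrac{8}{7}G_0$ and fix a pair of consecutive underlying balls meeting at a point $p$, so $\sfd_\Omega(p,x_i)\leq r_i$ and $\sfd_\Omega(p,x_{i+1})\leq r_{i+1}$. Then $B_\Omega(p,\min(r_i,r_{i+1})/7)\subset B_i\cap B_{i+1}$; using the comparability of radii (property~(3)) and doubling, this gives $\mu(B_i\cap B_{i+1})\gtrsim_c\mu(B_i)$. The genuine obstacle lies in Step~1: only the interplay of the hypothesised thick chain (giving thickness), the Gehring--Hayman condition (controlling the metric length of a quasihyperbolic geodesic), and the ball separation condition (which forces the geodesic to track the thick chain) yields a single curve satisfying both bounds. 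Once Step~1 is in hand, Steps~2 and~3 are routine consequences of the Whitney properties and of the doubling of $\mu$.
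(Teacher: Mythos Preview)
Your proposal is correct and follows essentially the same route as the paper: take a quasihyperbolic geodesic between the centres, use the $C$--Gehring--Hayman condition to bound its length by $\diam(B)$, use the $C$--ball--separation condition together with the hypothesised thick chain to force $\dist(z,\partial\Omega)\gtrsim_{c,C}\diam(B)$ along the geodesic, and then extract a minimal chain of Whitney balls along it and enlarge by $\tfrac{8}{7}$. Your Step~1 spells out the thickness estimate more explicitly than the paper (which phrases it as ``any other curve must intersect $(4C)_\Omega B_x$, hence $\diam(B_x)\gtrsim\diam(B)$''), and your Step~2 uses a doubling/measure count rather than the paper's length argument, but these are cosmetic differences.
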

\begin{proof}
The proof follows the Euclidean case in \cite{KRZ2017} Lemma 2.3.
    The C-Gehring-Hayman condition and the assumption $\dist_{\Omega}(B, \tilde {B}) \leq c\diam(B)$
yields a quasihyperbolic geodesic $\gamma$ connecting $B$ and $\tilde {B}$ such that $\ell(\gamma) \ls \diam(B)$. Now the diameters of the Whitney-type balls intersecting $\gamma$ are uniformly bounded from above by a
multiple of $\diam(B)$.
Further, for every Whitney-type ball $B_x$ with $B_x \cap \gamma \neq \emptyset$, by the C-ball-separation condition and
the definition of the Whitney-type balls, namely the property of distance of the Whitney-type ball to the boundary, any other curve connecting $B$ and $\tilde {B}$ must intersect $(4C)_\Omega B_x$.
On the other hand, by our assumption, there exists a sequence of balls connecting $B$ and $\tilde {B}$ with
edge lengths not less than $c^{-1}\diam(B)$. It follows that $\diam(B_x) \gs \diam(B)$.
Finally, for all $B_x \cap \gamma \neq \emptyset$, $\diam(B_x) \sim \diam(B)$ with the constant only depending on c and C.
Since $\ell(\gamma) \ls \diam(B)$ the number of Whitney-type balls intersecting $\gamma$ must be bounded by a constant
depending only on c and C.

We have obtained a chain $\tilde{G}$ of balls that covers the geodesic $\gamma$ and the number of balls is at most $C'(c,C)$. We let $G_0$ be a minimal subchain of $\tilde{G}$ connecting $B$ and $\tilde{B}$. The chain $G_0$ has the following property: The first and the last balls in the chain intersect exactly one other ball in the chain, while all the other balls intersect exactly two, this gives an ordering of the balls in the chain. We now enlarge the balls in the collection, that is let $G(B,\tilde {B}) = \frac{8}{7}G_0(B,\tilde {B})$. From the modification of the chain we obtain the final desired property of overlap of subsequent balls in the collection.
\end{proof}

\begin{corollary}
    In the setting of the previous lemma, it holds
    \[
    \dist_{qh}(B_1,B_2) \leq C'(c,C).
    \]
\end{corollary}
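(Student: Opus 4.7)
The plan is to take the very quasihyperbolic geodesic $\gamma$ that was produced inside the proof of Lemma \ref{lem:wc} and estimate the integral defining $\dist_{qh}$ directly along it. Two ingredients from that proof are what make the corollary essentially free: the C-Gehring-Hayman condition combined with the hypothesis $\dist_\Omega(B,\tilde{B}) \le c\diam(B)$ yields $\ell(\gamma) \ls \diam(B)$ with constants depending only on $c$ and $C$; and every Whitney-type ball $B_x$ meeting $\gamma$ satisfies $\diam(B_x) \sim \diam(B)$ with constants depending only on $c$ and $C$.

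Given this, I would pick endpoints $x \in B_1 = B$ and $\tilde{x} \in B_2 = \tilde{B}$ of $\gamma$ and observe, from the definition of the quasihyperbolic metric and interpreting $\dist_{qh}(B_1,B_2)$ as the infimum over pairs of points, that
$$
\dist_{qh}(B_1,B_2) \le \dist_{qh}(x,\tilde{x}) \le \int_\gamma \dist(z,\partial\Omega)^{-1}\,dz.
$$
Next, for each $z \in \gamma$ I would fix any Whitney-type ball $B_x$ containing $z$, which exists by property $(1)$ of Lemma \ref{lem:31}. Property $(4)$ of that same lemma, combined with the size comparison $\diam(B_x) \sim \diam(B)$ recalled above, then yields
$$
\dist(z,\partial\Omega) \ge \dist(B_x,\partial\Omega) \ge \diam(B_x) \gs \diam(B),
$$
so that $\dist(z,\partial\Omega)^{-1} \ls \diam(B)^{-1}$ uniformly in $z \in \gamma$.

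Putting the two pieces together produces
$$
\dist_{qh}(B_1,B_2) \le \int_\gamma \dist(z,\partial\Omega)^{-1}\,dz \ls \frac{\ell(\gamma)}{\diam(B)} \ls 1,
$$
with the final implicit constant depending only on $c$ and $C$, which is exactly the claimed bound $\dist_{qh}(B_1,B_2) \le C'(c,C)$. There is essentially no real obstacle in this argument: every estimate has already been established inside the proof of Lemma \ref{lem:wc}, and the only mild care required is the set-to-set interpretation of $\dist_{qh}$ so that the pointwise bound along the chosen geodesic transfers cleanly.
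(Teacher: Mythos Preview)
Your proof is correct and fills in the corollary that the paper states without proof. The approach---integrating $\dist(z,\partial\Omega)^{-1}$ along the very quasihyperbolic geodesic $\gamma$ produced in Lemma~\ref{lem:wc}, using property~(4) of Lemma~\ref{lem:31} together with the size comparison $\diam(B_x)\sim\diam(B)$ established there---is the natural one and is essentially what the paper has in mind.

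One small remark: the alternative reading of the corollary, equally immediate, is via the chain $G_0(B,\tilde B)$ itself rather than via $\gamma$: consecutive balls in the chain touch and have diameters comparable to their distance to $\partial\Omega$, so passing from one to the next costs at most a fixed amount of quasihyperbolic length, and there are at most $C'(c,C)$ steps. Your geodesic argument and this chain argument are two phrasings of the same estimate; yours is arguably the cleaner one since it reuses $\gamma$ directly rather than re-threading a curve through the chain.
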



\section{Decomposing the C-GHS domain}

We fix a PI space $(X,\sfd,\mu)$ and C-GHS domain $\Omega \subset X$.

Let $\mathcal{C}$ be the collection of all the Whitney-type covering balls inside $\Omega$.
Say $B_0 \in \mathcal{C}$ is a ball of maximal diameter (where the diameter is with respect to the internal distance).
Recall that a set $O \subset X$ can be partitioned into it's path-components, that is sets which are path-connected.
We denote $\Omega_{m,0}$ to be the path-component of 
\[
\bigcup_{\substack{B\in \mathcal{C}\\ \diam(B) \geq 2^{-m}}} B,
\]
with $B_0 \subset \Omega_{m,0}$.
Denote $\mathcal{C}_{m,0}$ the Whitney-type covering balls that are contained in $\Omega_{m,0}$ and let 
\[
\mathcal{D}_{m,0} = \{B\in \mathcal{C}:B \subset \Omega_{m,0} , B\cap \partial \Omega_{m,0} \neq \emptyset \}.
\]
This is the collection of the covering balls of $\Omega_{m,0}$ that meet its boundary.
We notice that there is some finite number of the covering balls, we relabel the balls in $\mathcal{C}$ so the balls in $\mathcal{D}_{m,0}$ are labeled consecutively $i = 1,\ldots,N$.

Define the union of the covering balls of the boundary as
\[
D_{m,0} = \bigcup_{B_i\in \mathcal{D}_{m,0}} B_i.
\]

We will consider dilated balls from the covering balls of the boundary, that is for $B_i\in \mathcal{D}_{m,0}$ we define (recall $C$ is the constant from the definition of a C-GHS space)
\[
U_j = (5C)_{\Omega}B_j.
\]

We also fix $m$ to be large enough so $U_j \cap B_0 = \emptyset$. Taking a larger $m$ makes the collection $\mathcal{D}_{m,0}$ move closer to the boundary of $\Omega$, and as such it will move away from $B_0$, which is intuitively at the center of $\Omega_{m,0}$.

Now for $B_j$ let $K_j$ (which may be empty) to be the union of all the path components of $\Omega \setminus U_j$ that do not contain $B_0$. This can be loosely interpreted as the collection of points in $\Omega$ whose connection (via paths) to $B_0$ is blocked by the dilated ball $U_j$.

Now we notice that $\Omega$ can be written as a union with respect to the components we defined above
\[
\Omega = \Omega_{m,0} \cup \bigcup_{B_j \in \mathcal{D}_{m,0}} U_j \cup \bigcup_{B_j\in \mathcal{D}_{m,0}} K_j.
\]
This follows from the fact that a geodesic that joins $B_0$ and some point outside $\Omega_{m,0}$ has to pass through the union of the boundary covering balls $D_{m,0}$ and the C-ball-separation condition. The C-ball-separation condition implies we can only go with a path from a point $x$ in a block component $K_j$ to $B_0$ through some $U_k$, from which it follows that $x \in K_k$.
More precisely, take a quasihyperbolic geodesic $\gamma_x$ connecting $B_0$ to a point $x \in \Omega$. We may assume $x \notin \Omega_{m,0}$. We must show that $x \in \bigcup_{B_j \in \mathcal{D}_{m,0}} U_j \cup \bigcup_{B_j\in \mathcal{D}_{m,0}} K_j$. Assume $x \notin U_j$ for all $j$. Now since $\gamma_x \cap D_{m,0} \neq \emptyset$, we find a $B_k$ that the geodesic $\gamma_x$ passes through. Now the $C$-ball-separation gives us that all curves connecting $B_0$ to the point $x$ have to pass through $B_\Omega(z, C\dist(z,\partial \Omega))$, and since $\dist(z,\partial \Omega) \leq 5\diam(B_k)$ for $B_k$ with $z \in B_k$, we obtain that $U_k$ has to block the curves passing from $B_0$ to $x$. Thus $x \in K_k$.

Suppose there exists a $B_k \in \mathcal{D}_{m,0}$ such that $U_j \cap U_k = \emptyset$ and $K_j \cap U_k \neq \emptyset$, then by the path-connectedness of $U_k \subset \Omega \setminus U_j$ and the definition of $K_j$ we have $U_k \subset K_j$.

We now refine the decomposition obtained above one by one according to the blocks $K_j$. The goal is to cut away the blocks from the center $\Omega_{m,0}$ one by one.

To this end define \[
\mathcal{C}_{m,1} = \{B \in \mathcal{C}_{m,0}: B \not\subset (K_1\setminus(25C)_{\Omega}B_1)\},
\]
and let 
\[\Omega_{m,1} = \bigcup_{B \in \mathcal{C}_{m,1}} B \subset \Omega_{m,0},
\]
and
\[
\mathcal{D}_{m,1} = \{B\in \mathcal{D}_{m,0}: B \subset \Omega_{m,1}, B \cap \partial \Omega_{m,1} \neq \emptyset \}.
\]

Now we claim that the decomposition
\[
\Omega = \Omega_{m,1} \cup \bigcup_{B_j \in \mathcal{D}_{m,1}} U_j \cup \bigcup_{B_j\in \mathcal{D}_{m,1}} K_j,
\]
still holds.
Consider first $y \in K_k$ with $B_k \notin \mathcal{D}_{m,1}$. Now $B_k \subset K_1\setminus (25C)_\Omega B_1$. Recall that $2^{-m} \leq \diam(B_k) < 2^{-m+2}$ for any $Q_k \in \mathcal{D}_{m,0}$. Now $U_k \cap U_1 = \emptyset$ and thus $U_k \subset K_1$.
If $y \in U_k$ with $B_k \notin \mathcal{D}_{m,1}$, then $B_k \subset K_1\setminus (25C)_\Omega B_1$.
Finally if $y \in \Omega_{m,0} \setminus \Omega_{m,1}$ then $y$ is contained in a ball in $\mathcal{C}_{m,0}$, but not $\mathcal{C}_{m,1}$, so evidently $y \in K_1\setminus (25C)_\Omega B_1$.
In each of the three cases we thus have $y \in K_1$.

We continue recursively, let $j \in \{ 2,3,\ldots,N\}$. 

If $B_j \notin \mathcal{D}_{m,j-1}$, we let $\Omega_{m,j} = \Omega_{m,j-1}$, $\mathcal{C}_{m,j} = \mathcal{C}_{m,j-1}$
and $\mathcal{D}_{m,j} = \mathcal{D}_{m,j-1}$.

Otherwise we let, as above, 
\[
\mathcal{C}_{m,j} = \{B\in \mathcal{C}_{m,j-1} : B \not\subset (K_j \setminus (25C)_\Omega B_j) \},
\]
\[
\Omega_{m,j} = \bigcup_{B \in \mathcal{C}_{m,j}} B \subset \Omega_{m,j-1} ,
\]
and
\[
\mathcal{D}_{m,j} = \{B \in \mathcal{D}_{m,j-1}: B \subset \Omega_{m,j}, B \cap \partial\Omega_{m,j} \neq \emptyset \}.
\]

Finally let $\Omega_m = \Omega_{m,N}$ to obtain the figurative 'center' of the domain.

Now any Whitney-type covering ball in $\mathcal{C}_{m,N}$ that intersects $\partial\Omega_m$ is contained in $(60C)_\Omega B_j$ for some $j$ and has edge length comparable to $2^{-m}$ with constant $c(C)$. Further there is a constant $M(C,d)$ with $\Omega_m \subset\subset \Omega_{m'}$ whenever $m' \geq m+M(C)$. Notice that $\Omega = \bigcup_m \Omega_{m,0}$, this and the above we find
$\Omega = \bigcup_m \Omega_m$. Denote $\mathcal{D}_{m} = \mathcal{D}_{m,N}$.

We have obtained

\begin{lemma}\label{lem:wprop}
Let $\Omega \subset X$ be a C-GHS domain (with respect to the quasihyperbolic metric), the collection $\mathcal{C}$ the Whitney-type covering balls of $\Omega$ as defined in Lemma \ref{lem:31} and let $B_0$ be one of the largest scale ones.

There exists a sequence of sets $\Omega_m \subset\subset \Omega$ such that, by setting
\[
\mathcal{B}_m = \{B\in \mathcal{C}:B\subset\Omega_m, B\cap \partial\Omega_m \neq \emptyset \}
\]
and by letting $U_j = (5C)_\Omega B_j$ for all $B_j \in \mathcal{B}_m$ and $K_j$ as the union of all path-components of $\Omega \setminus U_j$ that do not contain $B_0$, the following conditions hold:

\begin{enumerate}[label=(\alph*)]
    \item $\Omega_m$ consists of finitely many Whitney-type balls and any two of them can be joined by a chain of Whitney-type balls in $\Omega$ of radius $r \geq 2^{-m}$. Further $B_0 \subset \Omega_m$ and there exists a constant $M = M(C)$ such that $\Omega = \bigcup_m \Omega_m$ and $\Omega_m \subset\subset \Omega_{m'}$ for any $m' \geq m+M$.
    \item For every $B \in \mathcal{B}_m$ it holds $2^{-m} \leq \diam(B) \ls 2^{-m}$.
    \item There exists a subcollection $\mathcal{D}_m$ of $\mathcal{B}_m$ such that for each $B_k \in \mathcal{B}_m$ and $B_j\in\mathcal{D}_m$ with $B_k \cap K_j \neq \emptyset$ it holds $B_k \subset (60C)_\Omega B_j$. Furthermore $\{(60C)_\Omega B_j\}_{B_j\in \mathcal{D}_m}$ covers all the boundary balls of $\Omega_m$.
    \item It holds 
    \[
    \Omega = \Omega_m \cup \bigcup_{B_j \in \mathcal{D}_m}U_j \cup \bigcup_{B_j\in \mathcal{D}_m} K_j.
    \]
\end{enumerate}
\end{lemma}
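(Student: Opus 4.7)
The plan is to verify the four properties (a)--(d) by reading them off the recursive construction already carried out in the paragraphs preceding the lemma. The construction has done all of the genuine geometry; what remains is a careful bookkeeping argument, so I would take the properties in the order (b), (c), (d), (a), since each successive item relies on more of the recursion.

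For property (b), the lower bound $\diam(B) \geq 2^{-m}$ on $B \in \mathcal{B}_m$ is built directly into the definition of $\Omega_{m,0}$ and is preserved under the recursive cutting. The upper bound $\diam(B) \ls 2^{-m}$ is exactly the observation recorded just before the lemma statement: any Whitney-type ball meeting $\partial\Omega_m$ is contained in $(60C)_\Omega B_j$ for some $B_j \in \mathcal{D}_{m,N}$ with $\diam(B_j) \sim 2^{-m}$, and the doubling of $\mu$ together with Lemma \ref{lem:31}(3) forces the diameter of $B$ to be at most a constant times $2^{-m}$. Taking $\mathcal{D}_m := \mathcal{D}_{m,N}$, property (c) falls out of the same observation: if $B_k \in \mathcal{B}_m$ meets $K_j$ for some $B_j \in \mathcal{D}_m$, then since the recursion removed every ball contained in $K_j \setminus (25C)_\Omega B_j$, the ball $B_k$ must meet $(25C)_\Omega B_j$, and then its own diameter, bounded via (b), allows us to widen the scaling constant to $60C$ while still containing $B_k$.

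Property (d) is proved by induction on the recursion index $j$. The base case $j=0$ is supplied in full in the excerpt: a quasihyperbolic geodesic from $B_0$ to any $x \notin \Omega_{m,0}$ must cross some $B_k \in \mathcal{D}_{m,0}$, and the $C$-ball-separation condition then forces $x \in U_k$ or $x \in K_k$. For the inductive step, a point cut off when passing from $\Omega_{m,j-1}$ to $\Omega_{m,j}$ lies in $K_j \setminus (25C)_\Omega B_j$, and this is already known to decompose correctly by the three-case analysis sketched for $j=1$ in the construction (using the observation $U_k \cap U_j = \emptyset \Rightarrow U_k \subset K_j$); the general inductive step is verbatim.

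Property (a) collects several more classical facts. Finiteness of $\mathcal{B}_m$ follows from a volume-packing argument using the lower diameter bound, the doubling of $\mu$, the bounded overlap of Lemma \ref{lem:31}(2), and boundedness of $\Omega$. Chain-connectivity of $\Omega_m$ and the inclusion $B_0 \subset \Omega_m$ are guaranteed by taking $\Omega_{m,0}$ as the path-component containing $B_0$ and by the fact that the recursion removes only subsets of $K_j \setminus (25C)_\Omega B_j$, which are by definition disjoint from $B_0$. The exhaustion $\Omega = \bigcup_m \Omega_m$ follows from $\Omega = \bigcup_m \Omega_{m,0}$ once one observes that any fixed $x \in \Omega$ is only cut off for finitely many $m$, since for $m$ large enough $x$ is surrounded by balls of comparable scale that escape the cuts. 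The most technical point, and the step I expect to require the most care, is the compact containment $\Omega_m \subset\subset \Omega_{m'}$ for $m' \geq m+M$. The strategy is that every ball making up $\Omega_m$ has diameter $\geq 2^{-m}$ and, by property (4) of Lemma \ref{lem:31}, sits at distance $\geq 2^{-m}$ from $\partial\Omega$; conversely, every boundary ball of $\Omega_{m'}$ has diameter $\sim 2^{-m'}$ and sits at distance $\ls 2^{-m'}$ from $\partial\Omega$. Choosing $M$ so that $c(C)2^{-m'} \ll 2^{-m}$ separates these regions in the ambient metric, and the $C$-Gehring-Hayman condition combined with the chain lemma (Lemma \ref{lem:wc}) upgrades the separation to the internal metric $\sfd_\Omega$, yielding the required compact containment.
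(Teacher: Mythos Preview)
Your proposal is correct and matches the paper's approach exactly: the lemma is introduced with the words ``We have obtained'' immediately after the recursive construction, and the paper offers no separate proof beyond that construction; your plan to read off (a)--(d) from it, in the order you give and with the arguments you sketch, is precisely what is intended. The one place where you supply more detail than the paper is the compact containment $\Omega_m \subset\subset \Omega_{m'}$, which the paper simply asserts; your distance-to-boundary separation argument is the right way to fill that in.
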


Later on we will apply the Poincar\'e inequality on the chains of Whitney-type balls. Here we will formulate this as a lemma. First denote the average of a function $u$ over a Whitney-type ball $B_j \in \mathcal{B}_m$ as
\[
a_j(u) = \frac{1}{\mu(B_j)}\int_{B_j} u(x)\,d\mu(x).
\]
We suppress the notation into $a_j$ whenever the function $u$ is clear from the context.
We use Lemma $\ref{lem:wc}$ on the balls $B_j,B_k$ with diameters comparable to $2^{-m}$.
First let $B_j = B$ and $B_k = \tilde{B}$. We obtain the chain $G(B_j,B_k) = G(B,\tilde{B}) = \{\tilde{B}_i\}_{i=1}^{N}$, for which the overlap estimate $\mu(\tilde{B}_i \cap \tilde{B}_{i+1}) \gs \mu(\tilde{B}_i)$ holds.

\begin{lemma}[Poincar\'e on chains]\label{lem:poinchain}
Let $u \in W^{1,p}(\Omega)$ and
let $B_j, B_k \in \mathcal{B}_m$ be Whitney-type balls. Let $G(B_j,B_k) = G(B,\tilde{B})$ be the chain of Whitney-type balls given in Lemma $\ref{lem:wc}$ with diameter comparable to $2^{-m}$. It holds
\[
\int_{{B}_j} \lvert a_j-a_k\rvert^p d\mu(x) \ls 2^{-mp}\mu({B}_j)\sum_{i=1}^{N-1}\frac{1}{\mu(\tilde{B}_i)}\int_{B_i} \lvert \nabla u\rvert^p d\mu.
\]
\end{lemma}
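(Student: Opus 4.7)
The plan is to use a telescoping identity along the chain $G=\{\tilde B_i\}_{i=1}^N$ combined with the strong $(1,p)$-Poincar\'e inequality on each link. Since $a_j-a_k$ is independent of $x\in B_j$, the left-hand side equals $\mu(B_j)|a_j-a_k|^p$, so the claim reduces to the pointwise estimate
\[
|a_j-a_k|^p \ls 2^{-mp}\sum_{i=1}^{N-1}\frac{1}{\mu(\tilde B_i)}\int_{\tilde B_i}|\nabla u|^p\, d\mu,
\]
where I read the $B_i$ of the statement as the chain ball $\tilde B_i$.

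I would first telescope
\[
a_j-a_k = \sum_{i=1}^{N-1}(a_{\tilde B_i}-a_{\tilde B_{i+1}}),
\]
and note that by Lemma \ref{lem:wc} the number $N$ is bounded by a constant $C'(c,C)$ independent of $j,k,m$. Jensen's inequality (or the $\ell^p$ power-mean inequality applied to a bounded number of terms) then gives $|a_j-a_k|^p \ls \sum_{i=1}^{N-1}|a_{\tilde B_i}-a_{\tilde B_{i+1}}|^p$. For each consecutive pair I would pass through $u(x)$ on the overlap $\tilde B_i\cap \tilde B_{i+1}$ and invoke the overlap estimate $\mu(\tilde B_i\cap \tilde B_{i+1})\gs \mu(\tilde B_i)\sim \mu(\tilde B_{i+1})$, again from Lemma \ref{lem:wc}, to obtain
\[
|a_{\tilde B_i}-a_{\tilde B_{i+1}}| \ls \frac{1}{\mu(\tilde B_i)}\int_{\tilde B_i}|u-a_{\tilde B_i}|\,d\mu + \frac{1}{\mu(\tilde B_{i+1})}\int_{\tilde B_{i+1}}|u-a_{\tilde B_{i+1}}|\,d\mu.
\]

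Finally I would apply the strong $(1,p)$-Poincar\'e inequality on each $\tilde B_i$, using $\diam(\tilde B_i)\sim 2^{-m}$, to bound each of the above $L^1$-averages by $2^{-m}\bigl(\frac{1}{\mu(\tilde B_i)}\int_{\tilde B_i}|\nabla u|^p\,d\mu\bigr)^{1/p}$, raise the resulting inequality to the $p$-th power, and sum over $i$. Multiplying by $\mu(B_j)$ then produces the claimed bound.

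I do not expect a genuine obstacle: the two nontrivial ingredients, the bounded chain length and the overlap of consecutive chain balls, were built into Lemma \ref{lem:wc} precisely for this application, and the Poincar\'e inequality is a standing assumption on $X$. The only care required is to raise to the $p$-th power at the right moment, so that the right-hand side of the final sum features $\int|\nabla u|^p$ rather than its $1/p$-power.
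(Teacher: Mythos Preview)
Your proposal is correct and follows essentially the same argument as the paper: telescope along the chain, use the uniform bound on the chain length to pass to a sum of $|a_{\tilde B_i}-a_{\tilde B_{i+1}}|^p$, compare consecutive averages through $u$ on the overlap $\tilde B_i\cap\tilde B_{i+1}$ via the estimate $\mu(\tilde B_i\cap\tilde B_{i+1})\gs\mu(\tilde B_i)$, and then apply the $(1,p)$-Poincar\'e inequality on each chain ball using $\diam(\tilde B_i)\sim 2^{-m}$. Your reading of $B_i$ as $\tilde B_i$ on the right-hand side is also the intended one.
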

\begin{proof}

The left hand side can be estimated from above by 
    \begin{equation}
    \begin{split}
    \int_{{B}_j} \lvert a_j-a_k\rvert^p d\mu(x) & \ls \sum_{i=1}^{N-1} \int_{{B}_j}\lvert a_i - a_{i+1}\rvert^p d\mu = \sum_{i=1}^{N-1} \mu({B}_j)\lvert a_i - a_{i+1}\rvert^p.
    \end{split}
    \end{equation}
We estimate the term in the sum

\begin{equation}
    \begin{split}
     \lvert a_i -a_{i+1}\rvert^p =
     & \Big(\frac{1}{\mu(\tilde{B}_i \cap \tilde{B}_{i+1})}\int_{\tilde{B}_i \cap \tilde{B}_{i+1}} \lvert a_i-a_{i+1}\rvert d\mu\Big)^p \\
     \ls & \Big(\frac{1}{\mu(\tilde{B}_i)}\int_{\tilde{B_i}} \lvert u-a_i \rvert d\mu \Big)^p + \Big(\frac{1}{\mu(\tilde{B}_{i+1})}\int_{\tilde{B}_{i+1}} \lvert u-a_{i+1} \rvert d\mu\Big)^p.
     \end{split}
\end{equation}

Using the $(1,p)$-Poincar\'e inequality on these terms yields
\[
 \lvert a_i-a_{i+1}\rvert^p \ls 2^{-mp} \Big(\frac{1}{\mu(\tilde{B}_i)}\int_{\tilde{B}_i}\lvert\nabla u\rvert^p d\mu + \frac{1}{\mu(\tilde{B}_{i+1})}\int_{\tilde{B}_{i+1}}\lvert\nabla u\rvert^p d\mu\Big).
\]

\end{proof}

\begin{corollary}\label{cor:poin}
    It holds
    \[
2^{mp}\int_{B_j} \lvert a_j-a_k\rvert^p d\mu(x) \ls \int_{G(B_j,B_k)} \lvert \nabla u\rvert^p d\mu.
\]
\end{corollary}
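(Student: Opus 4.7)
The corollary is essentially a clean repackaging of Lemma \ref{lem:poinchain}. The plan is to start from the conclusion of that lemma,
\[
\int_{B_j}\lvert a_j-a_k\rvert^p\,d\mu \ls 2^{-mp}\mu(B_j)\sum_{i=1}^{N-1}\frac{1}{\mu(\tilde B_i)}\int_{\tilde B_i}\lvert\nabla u\rvert^p\,d\mu,
\]
and then reorganize the right-hand side into a single integral over $G(B_j,B_k)$.

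The first step is to eliminate the weight $\mu(B_j)/\mu(\tilde B_i)$. By Lemma \ref{lem:wc} every ball $\tilde B_i$ in the chain $G(B_j,B_k)$ has diameter comparable to $2^{-m}$ and hence comparable to that of $B_j$; moreover all these balls lie within a bounded multiple of $B_j$ in the internal metric. The doubling property of $\mu$ therefore gives $\mu(B_j)\sim \mu(\tilde B_i)$ with constants depending only on the doubling constant and on the constants $c,C$ appearing in Lemma \ref{lem:wc}. Consequently each prefactor $\mu(B_j)/\mu(\tilde B_i)$ is bounded by a constant.

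The second step is to absorb the sum into one integral. Again by Lemma \ref{lem:wc}, the number $N$ of balls in the chain $G(B_j,B_k)$ is bounded by a constant $C'=C'(c,C)$, and the chain also has bounded overlap (it is a finite subfamily of the original Whitney-type covering, whose balls have overlap bounded by the universal constant from Lemma \ref{lem:31}). Hence
\[
\sum_{i=1}^{N-1}\int_{\tilde B_i}\lvert\nabla u\rvert^p\,d\mu \ls \int_{G(B_j,B_k)}\lvert\nabla u\rvert^p\,d\mu.
\]

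Combining the two steps and moving the factor $2^{-mp}$ to the left-hand side yields the claimed inequality. There is no real obstacle here; the only thing to keep track of is that all implicit constants depend only on the doubling and Poincar\'e constants of $X$ and on the C-GHS constant $C$, which is uniform across all pairs $B_j,B_k\in\mathcal B_m$ to which Lemma \ref{lem:wc} applies.
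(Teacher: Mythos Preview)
Your proof is correct and follows essentially the same approach as the paper: the paper's proof simply notes that the measures of all balls in the chain are comparable (via doubling, since their diameters are comparable to $2^{-m}$ and they are nearby) and that the chain contains uniformly finitely many balls, which are exactly your two steps. You have merely spelled out the details more carefully.
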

\begin{proof}
    The measures of the balls are all comparable to $2^{-m}$ and there are uniformly finitely many balls in the chain $G(B_j,B_k)$ for any pair of balls $B_j,B_k$.
\end{proof}

We next decompose $\Omega \setminus \Omega_m$, that is the part of $\Omega$ near the boundary.

First we let 
\[
E_m = \Bigg[ \bigcup_{B_j \in \mathcal{D}_m} (70C)_{\Omega} B_j \Bigg] \setminus \Omega_m,
\]
and further define the rest of the set as
\[
F_m = \Omega \setminus (\Omega_m \cup E_m).
\]

By definition it holds $\dist_{\Omega}(F_m,\Omega_m) \geq 2^{-m}$,
and 
\[
F_m \subset \bigcup_{B_j \in \mathcal{D}_m} K_j.
\]

We set the following notation for sufficiently thin neighborhoods (with respect to the internal metric) of sets
\[
\mathcal{N}_{m,\Omega}(A) = \{x \in \Omega: \dist_{\Omega}(x,A) \leq 2^{-5-m}\}.
\]

We look to decompose the sets $E_m,F_m$ into disjoint sets and then show that the neighborhoods of these sets do not overlap much.

We now decompose the set $E_m$.

To this end notice that \[
E_m \subset \bigcup_{B_j \in \mathcal{D}_m}(71C)_{\Omega}B_j.
\]
Let $V_j = (71C)_{\Omega}B_j$ for each $B_j \in \mathcal{D}_m$ and assume the sets in $\mathcal{D}_m$ are ordered $\{B_1, \ldots, B_{\tilde{N}}\}$. 
For a fixed $V_j$ it holds
\[
\#\{1\leq k\leq \tilde{N}: \dist_{\Omega}(V_j,V_k) \leq 2^{-m-4} \} \leq C'(C),
\]
Which follows from the fact that $\dist_{\Omega}(B_j,B_k) \ls 2^{-m}$ and $\diam(B_j) \sim 2^{-m}$ and both the constants are independent of the index $j$.

Define now the disjoint sets decomposing $E_m$ as follows: Let \[S_1 = V_1 \cap E_m, \] and then set \[S_j = \Big(V_j\setminus \bigcup_{i=1}^{j-1}V_i \Big) \cap E_m.\]

What we note is that the sets $S_j$ still cover $E_m$ 
and have the same overlap bound, that is
\[
\#\{1\leq k\leq \tilde{N}: \dist_{\Omega}(S_j,S_k) \leq 2^{-m-4} \} \leq C'(C).
\]

We also note that \[
\#\{k\in \N: B_j \in \mathcal{B}_m, B_j \cap S_k \neq \emptyset\} \leq C'(C),
\]
and $S_j\cap S_k = \emptyset$ and $\diam_\Omega (S_j) \ls 2^{-m}$.

Let us proceed with the decomposition of $F_m$. To this end we first define $T_j^{'} = K_j \cap F_m$.
It might be the case that $T_j^{'}$ is empty. Further it holds that $F_m = \bigcup_j T_j^{'}$.

Fix now $T_j^{'}$ and suppose $\dist_{\Omega}(T_j^{'}, S_k) \leq 2^{-m-4}$. It then holds $\dist_{\Omega}(V_j, V_k) \leq 2^{-m-4}$.
We check this. Assume that $\dist_{\Omega}(V_j, V_k) > 2^{-m-4}$. Now also $\dist_{\Omega}(U_j, V_k) > 2^{-m-4}$ and by the fact that $V_k$ is path-connected we can connect a point $x \in T_j^{'}\cap V_k$ to $B_k \in \mathcal{D}_m$ by a path in $V_k \subset \Omega \setminus U_j$. If $B_k$ can be connected to $B_0$ via a path in $\Omega \setminus U_j$, then also $x$ can be connected to $B_0$ via a path in $\Omega \setminus U_j$, which is a contradiction with the definition of the block $K_j$ which contains $T_j^{'}$. On the other hand if $B_k$ can not be connected to $B_0$ via a path in $\Omega \setminus U_j$, then $B_k \subset K_j$, then Lemma $\ref{lem:wprop}$ gives \[
\dist_{\Omega}(B_k,B_j) \leq 60C \diam(B_j).
\]

From these we obtain the overlap bound, it holds
\[
\#\{1\leq k\leq \tilde{N}: \dist_{\Omega}(T_j^{'},S_k) \leq 2^{-m-4}\} \leq C'(C).
\]
Note that if $y \in T_j^{'}\cap T_k^{'}$, then the path component of $T_j^{'}$ that contains $y$ is a subset of $T_j^{'}\cap T_k^{'}$.

We now define $T_1 = T_1^{'}$ and then set 
\[
T_j = T_j^{'} \setminus \bigcup_{i=1}^{j-1} T_i^{'}.
\]

Assume $T_j$ is fixed. Then it holds
\[
\#\{1\leq k\leq \tilde{N}: \dist_{\Omega}(T_j,S_k) \leq 2^{-m-4}\} \leq C'(C),
\]
and further for a fixed $S_j$ it holds
\[
\#\{1\leq k\leq \tilde{N}: \dist_{\Omega}(S_j,T_k) \leq 2^{-m-4}\} \leq C'(C).
\]

Finally we conclude that whenever $\dist_{\Omega}(B_j, S_k) < 2^{-m-4}$ with $B_j \in \mathcal{B}_m$, $\dist_{\Omega}(S_j, S_k) < 2^{-m-4}$ and $\dist_{\Omega}(T_j, S_k) < 2^{-m-4}$, then the corresponding $B_j, B_k$ satisfy $\dist_{\Omega}(B_j, B_k) \ls 2^{-m}$.

Let us gather from the above the overlap estimates for neighborhoods of the decomposition of $\Omega$.

\begin{lemma}\label{lem:overlap}
    For the decomposition of $\Omega$ given above and for each $B_j \in \mathcal{D}_m$ we have the following estimates
\begin{enumerate} 
    \item For each $B \in \mathcal{B}_m$ it holds 
\[ \#\{ 1\leq k \leq \tilde{N}: B \cap \mathcal{N}_{m,\Omega}(S_k) \neq \emptyset\} \leq C'(C),
\]
    \item $
\#\{ 1\leq k \leq \tilde{N}: \mathcal{N}_{m,\Omega}(S_j) \cap \mathcal{N}_{m,\Omega}(S_k) \neq \emptyset\} \leq C'(C),
$
    \item $
\#\{ 1\leq k \leq \tilde{N}: \mathcal{N}_{m,\Omega}(S_j) \cap \mathcal{N}_{m,\Omega}(T_k) \neq \emptyset\} \leq C'(C),
$
    \item $
\#\{ 1\leq k \leq \tilde{N}: \mathcal{N}_{m,\Omega}(T_j) \cap \mathcal{N}_{m,\Omega}(S_k) \neq \emptyset\} \leq C'(C),
$
    \item $
\#\{ 1\leq k \leq \tilde{N}: \mathcal{N}_{m,\Omega}(T_j) \cap \mathcal{N}_{m,\Omega}(T_k) \neq \emptyset\} \leq C'(C).
$
\end{enumerate}
\end{lemma}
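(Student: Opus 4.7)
The plan is to observe that all five bounds reduce to internal-distance statements that were essentially already proved in the construction preceding the lemma. The key elementary reduction is: if $\mathcal{N}_{m,\Omega}(A) \cap \mathcal{N}_{m,\Omega}(B) \neq \emptyset$, then by the triangle inequality for $\sfd_{\Omega}$ one has $\dist_\Omega(A,B) \leq 2 \cdot 2^{-5-m} < 2^{-m-4}$. Similarly, $B \cap \mathcal{N}_{m,\Omega}(S_k) \neq \emptyset$ for $B \in \mathcal{B}_m$ forces $\dist_\Omega(B, S_k) \leq 2^{-5-m} < 2^{-m-4}$. So the first step is this reduction, which converts all five counting statements into counts of sets with internal distance at most $2^{-m-4}$ from a fixed reference set.

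Once the reduction is made, items (2), (3), (4), and (5) are immediate: the discussion preceding the lemma established exactly these overlap bounds with the threshold $2^{-m-4}$, first for the decomposition $\{S_j\}$ of $E_m$ (inheriting the bound from the $\{V_j\}$ because $S_j \subset V_j$), then for the mixed $(S_j,T_k^{'})$ and $(T_j^{'},S_k)$ overlaps (via the path-connectedness argument and Lemma \ref{lem:wprop}(c)), and finally for the sets $T_j$ obtained by disjointification of the $T_j^{'}$. Since $S_j \subset S_j$ and $T_j \subset T_j^{'}$, the bounds pass directly to the disjoint decomposition.

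For item (1), I would apply the last sentence of the paragraph preceding the lemma: whenever $B_j \in \mathcal{B}_m$ satisfies $\dist_\Omega(B_j,S_k) < 2^{-m-4}$, the corresponding $B_k \in \mathcal{D}_m$ satisfies $\dist_\Omega(B_j,B_k) \lesssim 2^{-m}$. Combined with $\diam(B_k) \sim 2^{-m}$ from Lemma \ref{lem:wprop}(b), this forces each such $B_k$ to lie inside a uniformly dilated ball $cB_j$. The bounded-overlap property (2) of Lemma \ref{lem:31} for the Whitney-type covering, together with the doubling of $\mu$ and the comparability of diameters among nearby Whitney balls from property (3), bounds the number of such $B_k$ (and hence of such indices $k$) by a constant depending only on $C$ and the doubling constant.

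The only mildly delicate point, and the step I would be most careful with, is verifying that the disjointification $S_j = (V_j \setminus \bigcup_{i<j} V_i) \cap E_m$ and $T_j = T_j^{'} \setminus \bigcup_{i<j} T_i^{'}$ preserves the overlap bounds; this is fine because $S_j \subset V_j$ and $T_j \subset T_j^{'}$ imply $\dist_\Omega(S_j,\cdot) \geq \dist_\Omega(V_j,\cdot)$ and $\dist_\Omega(T_j,\cdot) \geq \dist_\Omega(T_j^{'},\cdot)$, so any neighborhood-intersection for the disjointified sets is already counted in the original overlap bound. Thus no new counting is needed and the lemma follows by collecting the five cases.
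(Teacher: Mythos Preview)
Your reduction via the triangle inequality and your handling of items (1)--(4) match the paper's approach exactly: reduce neighborhood intersections to the internal-distance thresholds $\dist_\Omega(\cdot,\cdot)\le 2^{-m-4}$ already controlled in the construction, and for item (1) pass to a count of Whitney balls near $B_j$ using doubling and comparable diameters. (One trivial slip: $2\cdot 2^{-5-m}=2^{-m-4}$, not strictly less; this is harmless since the preceding bounds are stated with $\le$.)

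The one place you are too quick is item (5). The discussion preceding the lemma establishes the $(S_j,S_k)$, $(T_j',S_k)$, and $(S_j,T_k)$ overlap bounds, but it does \emph{not} record a $(T_j',T_k')$ or $(T_j,T_k)$ bound; so item (5) is not literally ``immediate'' from what came before. The paper's own proof flags this: after asserting the $(T_j',T_k')$ bound it observes that the collar $\mathcal{N}_{m,\Omega}(T_j)\setminus T_j$ meets only boundedly many $V_k$, and then says one must re-run the path-connectedness/ball-separation argument used for the $(T_j',S_k)$ case. Concretely, because $T_j\subset K_j$ and the $T_j$ are disjoint, a point witnessing $\dist_\Omega(T_j,T_k)\le 2^{-m-4}$ lies in the thin collar of $T_j$, which is covered by finitely many $V_l$; the same contradiction-with-$K_j$ argument used earlier then forces $\dist_\Omega(B_j,B_k)\lesssim 2^{-m}$, yielding the count. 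Your outline should include this extra step for (5); once added, the argument is complete and identical in spirit to the paper's.
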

\begin{proof}
\begin{enumerate}
\item First recall that
\[
\#\{k\in \N: B \in \mathcal{B}_m, B \cap S_k \neq \emptyset\} \leq C'(C),
\]
from which it also follows 
\[
\#\{ 1\leq k \leq \tilde{N}: B \cap \mathcal{N}_{m,\Omega}(S_k) \neq \emptyset\} \leq C'(C).
\]

\item Now fix $B_j \in \mathcal{D}_m$, this stays fixed for the remaining items. We first recall that it holds
\[
\#\{1\leq k \leq \tilde{N}: \dist_{\Omega}(V_j,V_k) \leq 2^{-m-4} \} \leq C'(C),
\]

and

\[
\#\{1\leq k \leq \tilde{N}: \dist_{\Omega}(S_j,S_k) \leq 2^{-m-4} \} \leq C'(C).
\]

From this and the fact that for each $B \in \mathcal{B}_m$ it holds that $2^{-m} \leq \diam{B} \ls 2^{-m}$, it follows

\[
\#\{ 1\leq k \leq \tilde{N}: \mathcal{N}_{m,\Omega}(S_j) \cap \mathcal{N}_{m,\Omega}(S_k) \neq \emptyset\} \leq C'(C).
\]

\item Recall that it holds
\[
\#\{1\leq k \leq \tilde{N}: \dist_{\Omega}(S_j,T_k^{'}) \leq 2^{-m-4} \} \leq C'(C),
\]
and directly from this it holds
\[
\#\{1\leq k \leq \tilde{N}: \dist_{\Omega}(S_j,T_k) \leq 2^{-m-4} \} \leq C'(C).
\]
From this we get the estimate
\[
\#\{ 1\leq k \leq \tilde{N}: \mathcal{N}_{m,\Omega}(S_j) \cap \mathcal{N}_{m,\Omega}(T_k) \neq \emptyset\} \leq C'(C).
\]

\item Similarly as in the previous item we have also
\[
\#\{1\leq k \leq \tilde{N}: \dist_{\Omega}(T_j,S_k) \leq 2^{-m-4} \} \leq C'(C),
\]
and from this
\[
\#\{ 1\leq k \leq \tilde{N}: \mathcal{N}_{m,\Omega}(T_j) \cap \mathcal{N}_{m,\Omega}(S_k) \neq \emptyset\} \leq C'(C).
\]
\item Finally we have also
\[
\#\{1\leq k \leq \tilde{N}: \dist_{\Omega}(T_j^{'},T_k^{'}) \leq 2^{-m-4} \} \leq C'(C),
\]
from which it directly follows that
\[
\#\{1\leq k \leq \tilde{N}: \dist_{\Omega}(T_j,T_k) \leq 2^{-m-4} \} \leq C'(C).
\]

Notice that $\mu(\mathcal{N}_{m,\Omega}(S_j) \setminus S_j) \ls 2^{-m}$ uniformly in $j$. Similarly by the proof of the overlap bound for $S_k$ and $T_j^{'}$ we obtain that there is uniformly finitely many $V_k$ such that $V_k$ and $\mathcal{N}_{m,\Omega}(T_j) \setminus T_j$ intersect. Thus also $\mu(\mathcal{N}_{m,\Omega}(T_j) \setminus T_j) \ls 2^{-m}$. A similar argument as the one for the overlap bound for $S_k$ and $T_j^{'}$ yields the bound
\[
\#\{ 1\leq k \leq \tilde{N}: \mathcal{N}_{m,\Omega}(T_j) \cap \mathcal{N}_{m,\Omega}(T_k) \neq \emptyset\} \leq C'(C).
\]
\end{enumerate}
\end{proof}

We construct a partition of unity in $\Omega$ according to the decomposition given above.

\begin{lemma}\label{lem:p1}
    There exist functions $\psi, \phi_j, \varphi_j$, $1 \leq k \leq \tilde{N}$ such that
    \begin{itemize}
        \item The function $\psi$ is Lipschitz in $\Omega$, and compactly supported in $\Omega_m$, with $0 \leq \psi \leq 1$ and $\lvert  \nabla \psi(x)\rvert \ls 2^{-m}$. Here $\nabla$ stands for the minimal upper gradient.
        \item For each $j$ we have $\phi_j \in W^{1,\infty}(\Omega)$ and the support of $\phi_j$ is relatively closed in $\Omega$ and contained in $\mathcal{N}_{m,\Omega}(S_j)$, with $0 \leq \phi_j \leq 1$ and $\lvert \nabla \phi_j \rvert \ls 2^{-m}$.
        \item For each $j$ we have $\varphi_j \in W^{1,\infty}(\Omega)$ and the support of $\varphi_j$ is relatively closed in $\Omega$ and contained in $\mathcal{N}_{m,\Omega}(T_j)$, with $0 \leq \varphi_j \leq 1$ and $\lvert \nabla \varphi_j \rvert \ls 2^{-m}$.
        \item $\psi(x) + \sum_j \phi_j(x) + \sum_j \varphi_j(x) = 1$ for all $x \in \Omega$.
    \end{itemize}
\end{lemma}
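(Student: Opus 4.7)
The plan is to construct an unnormalized partition of unity by three families of Lipschitz ``bumps'' subordinate to the decomposition
\[
\Omega \,=\, \Omega_m \,\cup\, \bigcup_{j} S_j \,\cup\, \bigcup_{j} T_j
\]
of Lemma \ref{lem:wprop}(d), add them into a single function $\Phi$, and then divide by $\Phi$. Concretely, I would set
\[
\tilde\psi(x) \,:=\, \min\bigl(1,\; 2^{m+5}\dist_{\Omega}(x,\Omega\setminus \Omega_m)\bigr),
\]
\[
\tilde\phi_j(x) \,:=\, \max\bigl(0,\; 1 - 2^{m+5}\dist_{\Omega}(x,S_j)\bigr),\qquad
\tilde\varphi_j(x) \,:=\, \max\bigl(0,\; 1 - 2^{m+5}\dist_{\Omega}(x,T_j)\bigr).
\]
Each of these is $\sfd_\Omega$-Lipschitz with constant of order $2^{m}$, being a composition of the $1$-Lipschitz distance to a set with a piecewise affine function of slope $2^{m+5}$; since every rectifiable curve in $\Omega$ has the same length measured by $\sfd$ as by $\sfd_\Omega$, this constant also serves as an upper gradient with respect to $\sfd$, giving the required gradient bound. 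The support of $\tilde\psi$ sits in the closure of $\Omega_m$, which is compact in $\Omega$ by Lemma \ref{lem:wprop}(a); the supports of $\tilde\phi_j$ and $\tilde\varphi_j$ lie in $\mathcal N_{m,\Omega}(S_j)$ and $\mathcal N_{m,\Omega}(T_j)$ respectively, directly from the definition of those neighbourhoods.

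The crucial step is to verify the pointwise lower bound
\[
\Phi \,:=\, \tilde\psi \,+\, \sum_{j} \tilde\phi_j \,+\, \sum_{j} \tilde\varphi_j \,\geq\, 1 \quad\text{on } \Omega.
\]
I would split into cases. If $x \in \Omega\setminus \Omega_m$, Lemma \ref{lem:wprop}(d) gives $x\in S_j$ or $x\in T_j$ for some $j$, so $\tilde\phi_j(x)=1$ or $\tilde\varphi_j(x)=1$. If $x\in \Omega_m$ with $\dist_\Omega(x,\Omega\setminus\Omega_m)\geq 2^{-m-5}$, then $\tilde\psi(x)=1$. The intermediate case is $x\in \Omega_m$ with $d := \dist_\Omega(x,\Omega\setminus\Omega_m) < 2^{-m-5}$; since $\bigcup_j S_j \cup \bigcup_j T_j = \Omega\setminus\Omega_m$, we have $\inf_j \dist_\Omega(x,S_j)\wedge \inf_j \dist_\Omega(x,T_j) = d$, so for some $j$ either $\tilde\phi_j(x)$ or $\tilde\varphi_j(x)$ is at least $1-2^{m+5}d$, while $\tilde\psi(x)=2^{m+5}d$, giving a sum $\geq 1$. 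The overlap estimates of Lemma \ref{lem:overlap} ensure that only a uniformly bounded number of the summands are nonzero near any given point, so the sum defining $\Phi$ is locally finite and $\Phi$ is itself $\sfd_\Omega$-Lipschitz with constant $\ls 2^{m}$.

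Finally, I would set $\psi := \tilde\psi/\Phi$, $\phi_j := \tilde\phi_j/\Phi$ and $\varphi_j := \tilde\varphi_j/\Phi$. These are nonnegative, bounded by $1$, sum to $1$ on all of $\Omega$, and their supports coincide with those of their tildes, giving the required containments. Since $\Phi\geq 1$ is bounded away from zero, the quotient estimate
\[
\bigl|\nabla(\tilde f/\Phi)\bigr| \,\leq\, \frac{|\nabla \tilde f|}{\Phi} \,+\, \frac{\tilde f\,|\nabla \Phi|}{\Phi^{2}}
\]
combined with $\tilde f \leq 1$ and $|\nabla \tilde f|,|\nabla\Phi| \ls 2^{m}$ yields upper gradient bounds of order $2^{m}$ for $\psi,\phi_j,\varphi_j$. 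The main obstacle is precisely the lower bound $\Phi\geq 1$ in the intermediate case: it crucially exploits that the Lipschitz slopes of $\tilde\psi$ and of $\tilde\phi_j,\tilde\varphi_j$ are matched to the same constant $2^{m+5}$, so the linear excesses telescope, and that the sets $\{S_j\}\cup\{T_j\}$ genuinely exhaust $\Omega\setminus\Omega_m$ as guaranteed by Lemma \ref{lem:wprop}(d).
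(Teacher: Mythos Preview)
Your proposal is correct and follows essentially the same route as the paper's own proof: build internal-distance cutoffs for $\Omega_m$, the $S_j$, and the $T_j$, verify that their sum is $\geq 1$ pointwise and bounded above via the overlap lemma, then normalise by dividing through. The paper uses slightly different slope constants ($2^{m+6}$ for $\phi_j,\varphi_j$ and $2^{m+8}$ for $\psi$) and is terser about the lower bound, but your choice of a single matched constant and explicit three-case analysis is a harmless variant of the same argument.
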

\begin{proof}
    We define cut-offs with respect to the internal distance as follows. Let
    \[
    \phi_j(x) = \max\{1-2^{m+6}\dist_{\Omega}(x,S_j), 0\},
    \]
    and similarly
    \[
    \varphi_j(x) = \max\{1-2^{m+6}\dist_{\Omega}(x,T_j), 0\},
    \]
    and finally define
    \[
    \psi(x) = \min \{2^{m+8} \dist_{\Omega}(x,E_m\cup F_m), 1\}.
    \]

    It holds that 
    \[
    \psi(x)+\sum_{j}\phi_j(x) + \sum_j \varphi_j(x) \geq \chi_{\Omega}(x).
    \]

Now by the previous overlap lemma and the decomposition of $\Omega$ we have 
\[
\chi_{\Omega}(x) \leq \psi(x)+\sum_{j}\phi_j(x) + \sum_j \varphi_j(x) \ls \chi_{\Omega}(x),
\]

and thus by normalizing the terms to sum to one, that is by dividing each of the terms by the whole sum we obtain the desired partition of unity.
\end{proof}

\section{The proof of density}
Here we prove Theorem \ref{thm:density} using the decomposition and partition of unity from the previous sections. We recall the statement of the theorem here.

\begin{theorem}\label{thm:density_rec}
Let $(X,\sfd,\mu)$ be a $(1,p)$-PI space.
Let $\Omega \subset X$ be a C-GHS space and $1 < p < \infty$. Then $N^{1,\infty}(\Omega)$ is dense in $N^{1,p}(\Omega)$.
\end{theorem}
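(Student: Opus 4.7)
The plan is to approximate $u\in N^{1,p}(\Omega)$ scale by scale using the decomposition, the partition of unity of Lemma~\ref{lem:p1}, and a Lipschitz truncation on the interior piece $\Omega_m$. For each index $j\in\{1,\dots,\tilde{N}\}$ with anchor ball $B_j\in\mathcal{D}_m$ set
\[
a_j = \frac{1}{\mu(B_j)}\int_{B_j}u\,d\mu,
\]
and for $\lambda>0$ let $u^\lambda$ denote a McShane extension of $u$ restricted to $\{M(|\nabla u|)\leq\lambda\}\cap\Omega_m$, where $M$ is the Hardy--Littlewood maximal operator. By the standard maximal-function Lipschitz approximation in $(1,p)$-PI spaces, $u^\lambda$ is bounded, Lipschitz on $\Omega_m$, and agrees with $u$ outside a set whose $N^{1,p}(\Omega_m)$-content vanishes as $\lambda\to\infty$. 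Define
\[
v_{m,\lambda} = \psi\,u^\lambda + \sum_j a_j(\phi_j+\varphi_j).
\]
Since $\psi$ is compactly supported in $\Omega_m$, each $\phi_j,\varphi_j$ is Lipschitz with constant $\ls 2^m$, and Lemma~\ref{lem:overlap} makes the sum locally finite, we have $v_{m,\lambda}\in N^{1,\infty}(\Omega)$.

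Fix $\ez>0$ and, using Lemma~\ref{lem:wprop}(a) together with absolute continuity of the integral, choose $m$ so large that $\int_{\Omega\setminus\Omega_m}(|u|^p+|\nabla u|^p)\,d\mu<\ez$. The identity $\psi+\sum_j(\phi_j+\varphi_j)\equiv 1$ gives
\[
u-v_{m,\lambda} = \psi(u-u^\lambda) + \sum_j(u-a_j)(\phi_j+\varphi_j),
\]
with pointwise upper gradient bounded by
\[
\psi|\nabla(u-u^\lambda)| + |u-u^\lambda||\nabla\psi| + \sum_j(\phi_j+\varphi_j)|\nabla u| + \sum_j|u-a_j|\bigl(|\nabla\phi_j|+|\nabla\varphi_j|\bigr).
\]
The truncation contributions vanish in $L^p(\Omega_m)$ as $\lambda\to\infty$ by construction, while $\sum_j(\phi_j+\varphi_j)|\nabla u|$ is supported in $\bigcup_j(\mathcal{N}_{m,\Omega}(S_j)\cup\mathcal{N}_{m,\Omega}(T_j))\subset\Omega\setminus\Omega_m$ and is controlled by $\int_{\Omega\setminus\Omega_m}|\nabla u|^p\,d\mu$ after applying the bounded-overlap conclusions of Lemma~\ref{lem:overlap}.

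The crucial remaining piece is $\sum_j|u-a_j|\bigl(|\nabla\phi_j|+|\nabla\varphi_j|\bigr)$, whose derivative factor grows like $2^m$. For any Whitney ball $B\in\mathcal{B}_m$ meeting $\mathcal{N}_{m,\Omega}(S_j)\cup\mathcal{N}_{m,\Omega}(T_j)$, Lemma~\ref{lem:wc} supplies a chain of at most $C'(C)$ Whitney balls of diameter $\sim 2^{-m}$ joining $B$ to the anchor $B_j$ inside $\Omega$, and Corollary~\ref{cor:poin} delivers
\[
2^{mp}\int_B|u-a_j|^p\,d\mu \ls \int_{G(B,B_j)}|\nabla u|^p\,d\mu.
\]
Summing in $j$ and $B$ and invoking Lemma~\ref{lem:overlap} together with the uniform bound on chain lengths controls the total by $\ls\int_{\Omega\setminus\Omega_{m-M'}}|\nabla u|^p\,d\mu$ for some absolute $M'=M'(C)$, hence by $\ls \ez$ for $m$ large. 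The same chaining argument handles the $L^p$-norm of $u-v_{m,\lambda}$ directly via Lemma~\ref{lem:poinchain}. Sending first $\lambda\to\infty$ and then $m\to\infty$ finishes the proof. The main obstacle is precisely this last chaining step: verifying that for every relevant pair $(B,B_j)$ the chain produced by Lemma~\ref{lem:wc} stays inside $\Omega$ with uniformly bounded length and that the total multiplicity of the collection $\{G(B,B_j)\}$ remains uniformly bounded, so that the Poincar\'e factor $2^{-mp}$ genuinely cancels the $2^{mp}$ blow-up from $|\nabla\phi_j|$ and the resulting $L^p$-mass is confined to a neighborhood of $\Omega\setminus\Omega_m$; this is exactly where the full force of the C-GHS hypothesis enters, through the interplay between the Gehring--Hayman condition and the ball-separation condition.
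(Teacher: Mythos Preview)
Your overall strategy matches the paper's: use the decomposition $\Omega = \Omega_m \cup E_m \cup F_m$, the partition of unity from Lemma~\ref{lem:p1}, replace $u$ on $\Omega_m$ by a Lipschitz function, replace it on $E_m \cup F_m$ by the locally constant function $\sum_j a_j(\phi_j+\varphi_j)$, and control the gradient near $\partial\Omega_m$ via Corollary~\ref{cor:poin}. The chaining estimate for the term $\sum_j |u-a_j|\,(|\nabla\phi_j|+|\nabla\varphi_j|)$ is essentially correct and is where the C-GHS hypothesis enters, exactly as you say.

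There is, however, a genuine gap in your treatment of the $L^p$-norm. You assert that ``the same chaining argument handles the $L^p$-norm of $u-v_{m,\lambda}$ directly via Lemma~\ref{lem:poinchain}'', but this fails on the blocks $T_j$. On the bulk of $T_j$ one has $\psi=0$ and $\varphi_j=1$ while all other $\phi_k,\varphi_k$ vanish, so there $u-v_{m,\lambda}=u-a_j$, and you must control $\int_{T_j}|u-a_j|^p\,d\mu$. But $T_j=T_j'\setminus\bigcup_{i<j}T_i'$ is built from an entire path-component of $\Omega\setminus U_j$ and is \emph{not} covered by finitely many Whitney balls of scale $2^{-m}$; the Whitney balls deep inside $T_j$ have diameter $\ll 2^{-m}$, so neither Lemma~\ref{lem:wc} nor Lemma~\ref{lem:poinchain} applies there. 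In particular, without an a~priori bound on $u$ the quantity $|a_j|^p\mu(T_j)$ need not be small, since $\mu(T_j)/\mu(B_j)$ is in general unbounded. The paper resolves this by first truncating so that $\|v\|_{L^\infty(\Omega)}=1$; then $|a_j|\le 1$, hence $\|u_m\|_{L^\infty(\Omega)}\le 1$, and $\|u_m\|_{L^p(\Omega\setminus\Omega_m)}^p\le\mu(\Omega\setminus\Omega_m)<\varepsilon$ is immediate. You need the same reduction.

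A second, smaller issue concerns your interior approximation. The maximal-function truncation requires that $u$ be $C\lambda$-Lipschitz on $\{M(|\nabla u|)\le\lambda\}\cap\Omega_m$, but the standard telescoping argument only yields this for pairs $x,y$ with $\sfd(x,y)\lesssim\dist(\Omega_m,\partial\Omega)$, since $u$ and its upper gradient live only in $\Omega$. For widely separated points of $\Omega_m$ there is no telescoping chain of balls inside $\Omega$ whose centers remain in the good set, so the global Lipschitz bound (and hence the McShane step) is not justified as written. The paper sidesteps this by embedding $\Omega_m$ in a uniform subdomain $\Omega'\subset\Omega$, extending $v|_{\Omega'}$ to $N^{1,p}(X)$, and then invoking density of bounded Lipschitz functions in $N^{1,p}(X)$; once the approximant is globally Lipschitz on $X$ the issue disappears. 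If you keep the maximal-function route, you should again truncate $u$ first, so that for distant pairs one may simply use $|u(x)-u(y)|\le 2\|u\|_{L^\infty}\lesssim 2^m\,\sfd(x,y)$.
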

\begin{proof}
    Fix $\varepsilon > 0$ and $v \in N^{1,p}(\Omega)$. By truncating and normalizing we assume that $\lVert v\rVert_{L^{\infty}(\Omega)} = 1$. Let $m \in \N$, to be fixed.
    Recall by Lemma \ref{lem:wprop} that $\Omega = \Omega_m \cup E_m \cup F_m$. We define $D_m'$ to be the union of the Whitney type balls $B \in \mathcal{C}$ for which there is a chain of less or equal to $N_0(C)$ Whitney-type balls joining $B$ to a ball in $\mathcal{B}_m$. Here the constant is given by Lemma $\ref{lem:wc}$ using the fact that the diameters of the $B \in \mathcal{B}_m$ satisfy $2^{-m} \leq \diam{B} \ls 2^{-m}$.
    Now the quasihyperbolic distance from $B$ to $\bigcup_{B\in \mathcal{B}_m} B$ is uniformly bounded given $B \subset D_m'$. Now for any Whitney type ball $B \subset D_m'$ it holds
    \[
    \diam(B) \ls_C 2^{-m}.
    \]
    Further from Lemma \ref{lem:wprop} it holds $\mu(\Omega\setminus \Omega_m) \to 0$ as $m \to \infty$. Thus for $m$ large enough we have
    \begin{equation}\label{eq:123}
    \lVert v\rVert_{N^{1,p}(D_m' \cup E_m \cup F_m)}^{p} \leq \varepsilon \quad {\rm and} \quad \mu(E_m\cup F_m) \leq \varepsilon.
    \end{equation}

    We find a uniform domain $\Omega' \subset \Omega$ containing $\Omega_m$ (see \cite{R2020}). Since uniform domains are extension domains (see \cite{BS2007}) on PI spaces, we can extend the function $v |_{\Omega'}$ to a function $Ev|_{\Omega'} = w \in N^{1,p}(X)$. Now Lipschitz functions are dense in $N^{1,p}(X)$ for a PI space $X$ (see for example \cite{LP2024}), so we find a bounded $u \in \textrm{Lip}(X)$ such that 
    \[\lVert w-u\rVert_{N^{1,p}(X)} < \varepsilon.
    \]  It holds $u \in N^{1,\infty}(X)$ (see \cite[Corollary 5.24]{DCJ}). Now $u \in \textrm{Lip}(\Omega_m)$ and $u \in N^{1,\infty}(\Omega_m)$. Notice that \[
    \lVert u-v\rVert_{N^{1,p}(\Omega_m)} < \varepsilon.
    \]

    We now define $u_m$ on $\Omega$ by setting
    \[
    u_m(x) = u(x)\psi(x) + \sum_j a_j\phi_j(x) + \sum_j a_j\varphi_j(x),
    \]
    where the functions $\psi, \phi_j, \varphi_j$ are the functions from the partition of unity and $a_j$ is the integral average of $u$ over $B_j \in \mathcal{B}_m$, i.e.
    \[
    a_j = \frac{1}{\mu(B_j)}\int_{B_j} u(x)\,d\mu(x).
    \]

The estimate $\lVert u_m - v\rVert_{N^{1,p}(\Omega_m)} < \varepsilon$ still holds after taking the partition of unity.

Estimating the norm of the difference $u_m - v$ we obtain the estimate
\begin{equation}\label{eq:est}
\begin{split}
\lVert u_m-v\rVert_{N^{1,p}(\Omega)} \leq &\,\, \lVert u_m\rVert_{L^{p}(\Omega\setminus\Omega_m)} + \lVert \nabla u_m \rVert_{L^p(D_m' \cup E_m \cup F_m)} \\ &+ \lVert v\rVert_{N^{1,p}(D_m' \cup E_m \cup F_m)} + \lVert u_m-v\rVert_{N^{1,p}(\Omega_m)}.
\end{split}
\end{equation}

Notice that $u_m \in N^{1,\infty}(\Omega)$, since by boundedness of $\Omega$ there is finitely many $B_j \in \mathcal{B_m}$ and Lemma $\ref{lem:p1}$ gives sufficient estimates on the upper gradients. Furthermore $\lVert u_m\rVert_{L^{\infty}(\Omega)} \leq 1$, by the definition of $u_m$ and again Lemma $\ref{lem:p1}$ and the assumption on $u$. Thus we deduce $\lVert u_m \rVert_{L^p(\Omega \setminus \Omega_m)} \leq \varepsilon$, by the estimate on the $N^{1,p}(D_m^{'}\cup E_m\cup F_m)$ norm of $u$. Now using the estimates on the $N^{1,p}$-norms on the inequality (\ref{eq:est}), the definition of $\mathcal{N}_{m,\Omega}(\cdot)$ and again Lemma $\ref{lem:p1}$ we only need to show that 
\[
\int_{(\Omega \setminus \Omega_m)\cup (\bigcup_{B\in \mathcal{B}_m} B)} \lvert \nabla u_m \rvert^p d\mu \ls \varepsilon.
\]

To this end, write $G(B_j,B_k)$ for the union of Whitney type balls given in Lemma $\ref{lem:wc}$. Recall that $\dist_{\Omega}(\Omega_m, F_m) \geq 2^{-m}$. We will look to control the integrals of the upper gradient of $u_m$ with integrals of the upper gradient of $u$. Recall that by Lemma $\ref{lem:wc}$ there is a uniformly finite amount of balls contained in the chain $G(B_j,B_k)$ whenever $\mathcal{N}_{m,\Omega}(S_k) \cap B_j \neq \emptyset$. Using the Overlap Lemma $\ref{lem:overlap}$, Lemmas $\ref{lem:wc}$, $\ref{lem:p1}$ and Corollary $\ref{cor:poin}$
we compute for $B_j \in \mathcal{B}_m$

\begin{equation}
    \begin{split}
        &\int_{B_j} \lvert \nabla u_m\rvert^pd\mu\ls \int_{B_j} \lvert \nabla (u_m - a_j)\rvert^pd\mu \\
        \ls & \int_{B_j} \lvert \nabla [(u(x)-a_j)\psi(x)]\rvert^pd\mu(x) + \sum_{\substack{S_k \subset E_m \\ B_j \cap \mathcal{N}_{m,\Omega}(S_k) \neq \emptyset}} \int_{B_j} \lvert \nabla [(a_k-a_j)\phi_k(x)]\rvert^pd\mu(x) \\
        \ls & \int_{B_j} \lvert \nabla u\rvert^p + \lvert u(x)-a_j\rvert^p 2^{mp}d\mu(x) + \sum_{\substack{S_k \subset E_m \\ B_j \cap \mathcal{N}_{m,\Omega}(S_k) \neq \emptyset}} \int_{B_j} \lvert a_k-a_j\rvert^p 2^{mp}d\mu(x) \\
        \ls & \int_{B_j} \lvert \nabla u\rvert^pd\mu + \sum_{\substack{S_k \subset E_m \\ B_j \cap \mathcal{N}_{m,\Omega}(S_k) \neq \emptyset}} \int_{G(B_j,B_k)} \lvert \nabla u \rvert^pd\mu.
    \end{split}
\end{equation}

We make a similar estimate for the upper gradient in $S_j$. First recall that $\psi$ is compactly supported in $\Omega_m$. Using now the Overlap Lemma $\ref{lem:overlap}$, Lemmas $\ref{lem:p1}$, $\ref{lem:wc}$, $\ref{lem:poinchain}$ and the fact that $\diam(S_j) \ls 2^{-m}$ we compute for $S_j$

\begin{equation}
    \begin{split}
        &\int_{S_j} \lvert \nabla u_m\rvert^pd\mu\ls \int_{S_j} \lvert \nabla (u_m - a_j)\rvert^pd\mu \\
        \ls & \sum_{\substack{S_k \subset E_m \\ \mathcal{N}_{m,\Omega}(S_j) \cap \mathcal{N}_{m,\Omega}(S_k) \neq \emptyset}} \int_{S_j} \lvert \nabla [(a_k-a_j)\phi_k(x)]\rvert^pd\mu(x) \\
        & + \sum_{\substack{T_k \subset F_m \\ \mathcal{N}_{m,\Omega}(S_j) \cap \mathcal{N}_{m,\Omega}(T_k) \neq \emptyset}} \int_{S_j} \lvert \nabla [(a_k-a_j)\varphi_k(x)]\rvert^pd\mu(x)\\
        \ls & \sum_{\substack{S_k \subset E_m \\ \mathcal{N}_{m,\Omega}(S_j) \cap \mathcal{N}_{m,\Omega}(S_k) \neq \emptyset}} \int_{S_j}\lvert a_k-a_j\rvert^p 2^{mp}d\mu +\sum_{\substack{S_k \subset E_m \\ \mathcal{N}_{m,\Omega}(S_j) \cap \mathcal{N}_{m,\Omega}(T_k) \neq \emptyset}} \int_{S_j}\lvert a_k-a_j\rvert^p 2^{mp}d\mu \\
        \ls & \sum_{\substack{S_k \subset E_m \\ \mathcal{N}_{m,\Omega}(S_j) \cap \mathcal{N}_{m,\Omega}(S_k) \neq \emptyset}} \int_{G(B_j,B_k)} \lvert \nabla u \rvert^p d\mu + \sum_{\substack{S_k \subset E_m \\ \mathcal{N}_{m,\Omega}(S_j) \cap \mathcal{N}_{m,\Omega}(T_k) \neq \emptyset}} \int_{G(B_j,B_k)} \lvert \nabla u \rvert^p d\mu.
    \end{split}
\end{equation}

Analogously for $T_j$ we obtain 
\begin{equation}
\begin{split}
&\int_{T_j} \lvert \nabla u \rvert^pd\mu \\
\ls &\sum_{\substack{S_k \subset E_m \\ \mathcal{N}_{m,\Omega}(T_j) \cap \mathcal{N}_{m,\Omega}(S_k) \neq \emptyset}} \int_{G(B_j,B_k)} \lvert \nabla u \rvert^p d\mu + \sum_{\substack{S_k \subset E_m \\ \mathcal{N}_{m,\Omega}(T_j) \cap \mathcal{N}_{m,\Omega}(T_k) \neq \emptyset}} \int_{G(B_j,B_k)} \lvert \nabla u \rvert^p d\mu.
\end{split}
\end{equation}

Summing over all the $B_j \in \mathcal{B_m}$, $S_j$ and $T_j$ we obtain
\[ 
\int_{(\Omega \setminus \Omega_m)\cup (\bigcup_{B\in \mathcal{B}_m} B)} \lvert \nabla u_m \rvert^p d\mu \ls \int_{D^{'}_m \cup E_m \cup F_m} \lvert \nabla u \rvert^p d\mu \leq \varepsilon,
\]
and thus we are done.

\end{proof}

\end{document}